\newtheorem{theorem}{Theorem}[section]
\newtheorem{proposition}[theorem]{Proposition}
\newtheorem{corollary}[theorem]{Corollary}
\newtheorem{lemma}[theorem]{Lemma}
\numberwithin{equation}{section}
\begin{document}
\title{A subexponential vector-valued Bohnenblust-Hille type inequality}

\author[Albuquerque]{N. Albuquerque}
\address{Departamento de Matem\'atica, \newline
\indent Universidade Federal da Para\'iba, \newline
\indent 58.051-900 - Jo\~ao Pessoa, Brazil.}
\email{ngalbqrq@gmail.com}

\author[N\'u\~nez]{D. N\'u\~nez-Alarc\'on}
\address{Departamento de Matem\'atica, \newline
\indent Universidade Federal de Pernambuco, \newline
\indent 50.740-560 - Recife, Brazil.}
\email{danielnunezal@gmail.com}

\author[Serrano]{D. M. Serrano-Rodr\'iguez}
\address{Departamento de Matem\'atica, \newline
\indent Universidade Federal de Pernambuco, \newline
\indent 50.740-560 - Recife, Brazil.}
\email{dmserrano0@gmail.com}

\thanks{2010 Mathematics Subject Classification. 26D15, 32A05, 46B07, 46B09}

\thanks{N. Albuquerque was supported by CAPES}

\keywords{Bohnenblust--Hille inequality}

\begin{abstract}
Bayart, Pellegrino and Seoane recently proved that the polynomial
Bohnenblust--Hille inequality for complex scalars is subexponential. We show
that a vector valued polynomial Bohnenblust-Hille inequality on complex
Banach lattices is also subexponential for some special cases. Our main
result recovers the best known constants of the classical polynomial
inequality provided in \cite{bps}.
\end{abstract}

\maketitle

\section{Introduction and preliminaries}

The Bohnenblust--Hille inequality for complex homogeneous polynomials (\cite{bh}, 1931) asserts that there is a function $\mathcal{D}:\mathbb{N}\rightarrow \lbrack 1,\infty )$ such that no matter how we select a positive integer $N$ and an $m$-homogeneous polynomial $P$ on $\mathbb{C}^{N}$, the $\ell _{\frac{2m}{m+1}}$-norm of the set of coefficients of $P$ is bounded above by $\mathcal{D}(m)$ times the supremum norm of $P$ on the unit polydisc. Having good estimates for $\mathcal{D}(m)$ is crucial for applications (for instance to the determination of the exact asymptotic growth of the Bohr radius). In (\cite{DefAnnals}, 2011) it was proved that
\begin{equation}
\mathcal{D}(m)\leq \left( 1+\frac{1}{m}\right) ^{m-1}\sqrt{m}\left( \sqrt{2}\right)^{m-1} \label{defant55}
\end{equation}
which yields the hypercontractivity of the Bohnenblust-Hille inequality. The best known estimates for $\mathcal{D}(m)$ are due to F. Bayart, D. Pellegrino and J. Seoane (\cite{bps}, 2014) and show that the growth of $\mathcal{D}(m)$ is subexponential. More precisely, in \cite{bps} it is shown that for any $\varepsilon >0$, there is $\kappa >0$ such that 
\begin{equation*}
\mathcal{D}(m)\leq \kappa \left( 1+\varepsilon \right) ^{m},
\end{equation*}
for all positive integers $m$.

\vskip.3cm

In this paper we obtain similar estimates for some special Banach lattices. More specifically, we obtain a subexponential vector-valued Bohnenblust--Hille type inequality that combines the behaviour of $m$-homogeneous polynomial with values in Banach functions spaces, under the action of an $(r;1)$--summing operator. The study of this type inequalities was introduced by A. Defant, M. Maestre and U. Schwarting in \cite{DMS} and it was used as an important technical tool for estimate the multidimensional Bohr radius for such operators. Our approach is inspired on ideas and results from \cite{bps,DMS}.

\vskip.3cm

Let us recall some classical notions from Banach Space Theory. A Banach space $X$ has \emph{cotype} $q$, for $2\leq q<\infty$, if there is a constant $C>0$ such that 
\begin{equation}
\left( \sum_{i=1}^{n}\Vert x_{j}\Vert^{q}\right) ^{\frac{1}{q}}
\leq C\left(\int_{I}\left\Vert \sum_{j=1}^{n}r_{j}(t)x_{j}\right\Vert ^{2}dt\right)^{\frac{1}{2}},  \label{qqq}
\end{equation}
no matter how we select finitely many vectors $x_{1},\dots,x_{n}\in X$, where $I:=[0,1]$ and $r_{i}$ denotes the $i$-th Rademacher function. The smallest of all these constants is denoted by $C_{q}(X)$ and it is called the cotype $q$ constant of $X$. The infimum of the cotypes assumed by $X$ is denoted by $\cot X$. A result due to Kahane generalizes Khinchine's inequality to arbitrary Banach spaces and provides estimates between $L_{p}$ norms of Rademacher means:

\begin{theorem}[Kahane Inequality]
\label{kahane} Let $0<p,q<\infty$. Then there is a constant $\mathrm{K}_{p,q}>0$ for which 
\begin{equation*}
\left( \int_{I} \left\Vert \sum_{k=1}^{n} r_{k}(t) x_{k} \right\Vert^{q} dt\right)^\frac{1}{q}
\leq\mathrm{K}_{p,q} \left( \int_{I}\left\Vert 
\sum_{k=1}^{n} r_{k}(t)x_{k}\right\Vert ^{p} dt \right)^\frac{1}{p}
\end{equation*}
for all Banach spaces $X$ and $x_{1},\dots,x_{n}\in X$.
\end{theorem}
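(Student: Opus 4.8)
The plan is to reduce the statement to a single dimension-free and space-free estimate, namely $\|F\|_{L_{q}}\le C_{q}\|F\|_{L_{1}}$ for $F(t):=\bigl\|\sum_{k=1}^{n}r_{k}(t)x_{k}\bigr\|$, and then to extract that estimate from an exponential tail bound on $F$. Write $\mathbb{E}$ for $\int_{I}\cdot\,dt$ and $\mathbb{P}$ for Lebesgue measure on $I$, and set $S:=\sum_{k}r_{k}x_{k}$, so $F=\|S\|$. If $q\le p$, Jensen's inequality on $I$ gives $\|F\|_{L_{q}}\le\|F\|_{L_{p}}$ and we may take $\mathrm{K}_{p,q}=1$; so assume $q>p$. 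By the convexity of $r\mapsto\log\mathbb{E}F^{r}$ (Lyapunov) together with Jensen, it then suffices to produce, for each fixed $q\in[1,\infty)$, a constant $C_{q}$ with $\|F\|_{L_{q}}\le C_{q}\|F\|_{L_{1}}$: this yields $\mathrm{K}_{p,q}=C_{q}$ when $p\ge1$, and for $0<p<1$ one interpolates $\|F\|_{L_{1}}$ between $\|F\|_{L_{p}}$ and $\|F\|_{L_{2}}$ and absorbs the excess power of $\|F\|_{L_{1}}$ by invoking the case $q=2$. As $\|F\|_{L_{q}}$ is nondecreasing in $q$, it is even enough to handle $q=2^{k}$.

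The heart of the matter is a deviation inequality for $F$ with constants depending on neither $X$ nor $n$. I would symmetrise: let $(r_{k}')$ be an independent copy of $(r_{k})$ and put $\eta_{k}:=r_{k}r_{k}'$, again independent Rademacher variables and independent of $(r_{k})$. From $r_{k}=\tfrac12\bigl[(r_{k}+r_{k}')+(r_{k}-r_{k}')\bigr]$, together with the observation that precisely one of $r_{k}+r_{k}'$, $r_{k}-r_{k}'$ vanishes, the choice being dictated by $\eta_{k}$, one gets $S=\sum_{k\in A}r_{k}x_{k}+\sum_{k\notin A}r_{k}x_{k}$ for a uniformly random subset $A\subseteq\{1,\dots,n\}$, with the two partial sums conditionally independent given $A$ and each equal in law to a sub-sum of $S$. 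Feeding this splitting into a Hoffmann--J\o rgensen / L\'evy-type argument produces, with $s_{0}$ a fixed-level quantile of $F$, a recursion that forces $\mathbb{P}(F>s)\le Ce^{-\lambda s/s_{0}}$ for universal $C,\lambda$. Here $s_{0}\le 8\|F\|_{L_{1}}$ by Markov's inequality, and the one term that could block the recursion, $\max_{k}\|x_{k}\|$, is tamed by the bound $\max_{k}\|x_{k}\|\le\mathbb{E}F=\|F\|_{L_{1}}$ — obtained by conditioning on all coordinates but one, the convexity of the norm, and the symmetry of $r_{k}$. Hence $\mathbb{P}(F>s)\le Ce^{-\lambda s/\|F\|_{L_{1}}}$ with $C,\lambda$ absolute.

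Integrating this tail, $\|F\|_{L_{q}}^{q}=q\int_{0}^{\infty}s^{q-1}\,\mathbb{P}(F>s)\,ds\le C_{q}\,\|F\|_{L_{1}}^{q}$, which is exactly the reduced estimate; combined with the first paragraph this delivers $\mathrm{K}_{p,q}$ for all $0<p,q<\infty$. The main obstacle is clearly the deviation step: one must arrange the symmetrisation and the random partition so that the iteration genuinely closes with a constant depending only on the chosen quantile level, and one must absorb the single large-jump contribution $\max_{k}\|x_{k}\|$ as above; everything afterwards is routine.
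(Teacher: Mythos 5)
The paper itself offers no proof of this statement: Kahane's inequality is quoted as classical background, so there is nothing internal to compare your argument with; it has to stand on its own. Your surrounding architecture is the standard one and is sound: the reduction to $\Vert F\Vert_{L_q}\le C_q\Vert F\Vert_{L_1}$, the treatment of $0<p<1$ by H\"older absorption using the case $q=2$ (legitimate because all moments of the finite sum are finite), the bound $s_0\le 8\Vert F\Vert_{L_1}$ via Markov, the control of $\max_k\Vert x_k\Vert$ by a multiple of $\mathbb{E}F$ (your phrase ``conditioning on all coordinates but one'' actually yields $\Vert x_k\Vert\le 2\,\mathbb{E}F$; conditioning on $r_k$ alone and using Jensen gives $\Vert x_k\Vert\le\mathbb{E}F$ — either constant is harmless), and the integration of an exponential tail to recover all moments.

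The genuine gap is the central deviation step, which is asserted rather than proved, and the device you propose for it does not do the work. Splitting $S$ over a uniformly random subset $A$ via $\eta_k=r_kr_k'$ produces two conditionally independent halves each distributed like a sub-sum — but this is true for \emph{any} fixed partition of the indices, since the $r_k$ are independent; nothing in that observation forces the squared-probability recursion you need, and you yourself flag that making ``the iteration genuinely close'' is the open obstacle. The engine that actually closes it is different: L\'evy's maximal inequality for sums of independent \emph{symmetric} vectors, $\mathbb{P}(\max_j\Vert S_j\Vert>t)\le 2\,\mathbb{P}(\Vert S\Vert>t)$, combined with a first-passage-time decomposition (condition on the first index $j$ with $\Vert S_j\Vert>t$, note $\Vert S_j\Vert\le t+\Vert x_j\Vert$, and apply L\'evy to the independent tail sum $S-S_j$). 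This is precisely the proof of the Hoffmann--J{\o}rgensen inequality $\mathbb{P}(F>2t+s)\le 4\,\mathbb{P}(F>t)^2+\mathbb{P}(\max_k\Vert x_k\Vert>s)$, after which your choice $s\ge\mathbb{E}F$ kills the last term, the iteration $t_{j+1}=2t_j+\mathbb{E}F$ with $\mathbb{P}(F>t_0)\le\tfrac18$ gives doubly exponential decay along a geometric sequence, hence $\mathbb{P}(F>s)\le Ce^{-\lambda s/\mathbb{E}F}$, and the rest of your argument goes through verbatim. So either cite L\'evy and Hoffmann--J{\o}rgensen explicitly and drop the $\eta_k$/random-subset symmetrisation, or supply the stopping-time argument; as written, the heart of the proof is missing.
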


For $p\in\lbrack 1,\infty)$, the \emph{weak $\ell_{p}$-norm} of $x_{1},\dots,x_{N}$ in a Banach space $X$ is defined by
\begin{equation*}
\Vert\left( x_{i}\right) _{i=1}^{N}\Vert_{w,p}
:=\sup_{\Vert x^{\prime}\Vert_{X^{\prime}}\leq1} 
\left( \sum_{i=1}^{N}|x^{\prime}(x_{i})|^{p}\right) ^{\frac{1}{p}}.
\end{equation*}

Throughout this paper we will only consider complex Banach spaces. From now on $X,X_{1},\dots ,X_{m},Y$ stands for Banach spaces and $\mathcal{L}\left( X_{1},\dots ,X_{m};Y\right) $ denotes the Banach space of all (bounded) $m$-linear operators $U:X_{1}\times \cdots \times X_{m}\rightarrow Y$. A linear operator $u:X\rightarrow Y$ is \emph{absolutely summing} if $\Vert (u(x_{j}))_{j=1}^{\infty }\Vert _{\ell _{1}}<\infty $, whenever $\Vert \left( x_{j}\right) _{j=1}^{\infty }\Vert _{w,1}<\infty $. The concept of absolutely summing operator was extended to a general notion of \emph{absolutely $(p;q)$-summing} operators in the 1960's by B. Mitiagin and A. Pe\l czy\'{n}ski \cite{MiP} and A. Pietsch \cite{stu}. A linear operator $u:X\to Y$ is \emph{absolutely $(p;q)$-summing} if $\Vert (u(x_{j}))_{j=1}^{\infty }\Vert _{\ell _{p}}<\infty $ whenever $\Vert \left( x_{j}\right) _{j=1}^{\infty }\Vert _{w,q}<\infty $.

\vskip.3cm

A multilinear approach to absolutely $(p;q)$-summing operators is the concept of \emph{multiple $(q,p)$-summing} o\-pe\-ra\-tors (see, \emph{e.g.}, \cite{matos,david}). For $q,p_{1}\dots ,p_{m}\in \lbrack 1,+\infty ),\,U\in \mathcal{L}\left( X_{1},\dots ,X_{m};Y\right) $ is called \emph{multiple $(q,p_{1},\dots ,p_{m})$-summing} if there exists a constant $C>0$ such that 
\begin{equation*}
\left( \sum_{i_{1},\dots ,i_{m}=1}^{N}
  \left\| U\left(x_{i_{1}}^{(1)},\dots ,x_{i_{m}}^{(m)}\right) \right\|_{Y}^{q}\right)^{\frac{1}{q}}
\leq C\prod_{k=1}^{m}\left\Vert \left( x_{i}^{(k)}\right)_{i=1}^{N}\right\Vert _{w,p_{k}}
\end{equation*}
holds for each finite choice of vectors $x_{i}^{(k)}\in X_{k},\,1\leq i\leq N,\,1\leq k\leq m$. The vecto r space of all multiple $(q,p_{1},\dots ,p_{m})$-summing operators is denoted by $\Pi _{(q,p_{1},\dots,p_{m})}^{m} \left(
X_{1},\dots ,X_{m};Y\right) $. The infimum, $\pi _{\left( q,p_{1},\dots,p_{m}\right) }^{m}(U)$, taken over all possible constants $C$ satisfying the previous inequality defines \linebreak a complete norm in $\Pi_{\left(
q,p_{1},\dots ,p_{m}\right) }^{m}\left( X_{1},\dots ,X_{m};Y\right) $. When $p_{1}=\cdots =p_{m}=p$ we say that the $U$ is multiple $(q,p)$-summing \emph{ \ } and \emph{\ } the notation is changed to $\Pi _{(q,p)}^{m}\left(
X_{1},\dots ,X_{m};Y\right) $ and $\pi _{\left( q,p\right) }^{m}(\cdot )$. When $p=q$ we say that $U$ is multiple $p$-summing, and the notation is simplified to $\Pi _{p}^{m}\left( X_{1},\dots ,X_{m};Y\right) $ and $\pi_{p}^{m}(\cdot)$.

\vskip.3cm

A vector lattice $X$ is a \emph{Banach lattice} when it is a Banach space and has the property that $\left\Vert x\right\Vert <\left\Vert y\right\Vert$, whenever $x,y\in X$ fulfils $\left\vert x\right\vert <\left\vert y\right\vert $. Let us recall some notions from the theory of Banach lattices. A Banach lattice $X$ is $q$-\emph{concave}, $1\leq q<\infty $, if there is a constant $C>0$ such that, regardless of the choice of finitely many $x_{1},\dots ,x_{N}\in X$, we have
\begin{equation*}
\left( \sum_{n=1}^{N}\left\Vert x_{n}\right\Vert ^{q}\right) ^{\frac{1}{q}}
\leq C\left\Vert \left( \sum_{n=1}^{N}\left\vert x_{n}\right\vert^{q}\right)^{\frac{1}{q}}\right\Vert,
\end{equation*}
and the optimal constant $C$ is denoted by $M_{q}(X)$. On the other hand, a Banach lattice $X$ is called $p$-\emph{convex}, $1\leq p<\infty $, if there is a constant $C>0$ such that for every $x_{1},\dots ,x_{N}\in X$ we have
\begin{equation*}
\left\Vert \left( \sum_{n=1}^{N}\left\vert x_{n}\right\vert ^{p}\right) ^{\frac{1}{p}}\right\Vert
\leq C\left( \sum_{n=1}^{N}\left\Vert x_{n}\right\Vert ^{p}\right) ^{\frac{1}{p}},
\end{equation*}
and the optimal constant $C$ is denoted by $M^{p}(X)$. For more details see, \emph{e.g.}, \cite{LindTzaf,OkRiSan}.

\vskip.3cm

In \cite{DMS} the authors provide a vector-valued hypercontractive Bohnenblust--Hille type inequality for Banach lattices that is a strong extension of \cite[Theorem 1]{DefAnnals} and also the main technical tool for the estimates on multidimensional Bohr radii for operators from \cite{DMS}. The precise result is the following:

\begin{theorem}[{Defant, Maestre, Schwarting, \protect\cite[Theorem 5.3]{DMS}}]
\label{dms} Let $Y$ be a $q$-concave Banach Lattice, with $2\leq q<\infty $,
and $v:X\rightarrow Y$ an $(r,1)$-summing operator with $1\leq r<q$. Define 
\begin{equation*}
\rho :=\frac{qmr}{q+(m-1)r}.
\end{equation*}%
Then there is a constant $\mathcal{C}>0$ such that for every $m$-homogeneous
polynomial $P:\ell _{\infty }^{n}\rightarrow Y$, defined by $%
P(z)=\sum_{|\alpha |=m}c_{\alpha }z^{\alpha }$, the following holds 
\begin{equation*}
\left( \sum_{|\alpha |=m}\left\Vert vc_{\alpha }\right\Vert _{Y}^{\rho
}\right) ^{\frac{1}{\rho }}\leq \mathcal{C}^{m}\Vert P\Vert _{\mathbb{D}%
^{n}}.
\end{equation*}
\end{theorem}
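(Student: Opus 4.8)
The plan is to follow the template of \cite{DefAnnals}, substituting for the two scalar ingredients used there --- Khinchine's inequality and the cotype $2$ of $\mathbb{C}$ --- their natural replacements here: \thmref{kahane} together with the cotype $q$ of $Y$ (which holds because a $q$-concave Banach lattice with $q\ge 2$ has cotype $q$) will handle \emph{all but one} of the $m$ variables, and the $(r,1)$-summing operator $v$ will handle the remaining, distinguished one. As a first step I would pass from $P$ to the associated symmetric $m$-linear form $L\colon(\ell_\infty^{n})^{m}\to X$ with $L(z,\dots,z)=P(z)$; controlling the multinomial coefficients $\binom{m}{\alpha}$ relating $c_{\alpha}$ to the coefficients of $L$ without an $(m!)^{\Theta(1)}$ loss is done exactly as in \cite{DefAnnals}, namely by an induction on the degree $m$ and the number of variables $n$ in which one splits $P(z',z_{n})=\sum_{j=0}^{m}R_{j}(z')z_{n}^{j}$ according to the multiplicity of one variable and uses the Cauchy estimates $\Vert R_{j}\Vert_{\mathbb{D}^{n-1}}\le\Vert P\Vert_{\mathbb{D}^{n}}$ together with the monotonicity of the Bohnenblust--Hille exponents in the degree. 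This reduces matters to a vector-valued \emph{multilinear} Bohnenblust--Hille inequality for $L$, and that in turn to the $m$ ``mixed-norm'' building blocks below.

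The core estimate is: for each distinguished position $k\in\{1,\dots,m\}$,
\[
\Big(\sum_{i_{k}=1}^{n}\Big(\sum_{\widehat{i_{k}}}\Vert v\,L(e_{i_{1}},\dots,e_{i_{m}})\Vert_{Y}^{q}\Big)^{r/q}\Big)^{1/r}\le \mathcal{C}^{m}\,\Vert L\Vert ,
\]
the second sum running over all indices $i_{j}$ with $j\neq k$. To prove this (say $k=m$) I would first dispose of the inner $\ell_{q}$-sum by iterating the cotype-$q$ inequality for $Y$, one variable at a time: each application turns a sum $\sum_{i_{\ell}}\Vert\cdot\Vert_{Y}^{q}$ into an $L^{2}$-average over a Rademacher sign vector of $\Vert v\,L(\dots,\textstyle\sum_{i_{\ell}}r_{i_{\ell}}(t)e_{i_{\ell}},\dots)\Vert_{Y}$ (legitimate since $\Vert\sum_{i}\pm e_{i}\Vert_{\ell_\infty^{n}}=1$), using Minkowski's inequality to move the remaining sums past these averages and \thmref{kahane} to keep the exponents of the Rademacher averages under control. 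This bounds the inner quantity by $C_{q}(Y)^{m-1}K^{m-1}$ times a single average, over sign vectors $\delta^{(1)},\dots,\delta^{(m-1)}$, of $\Vert v\,L(\sum_{i}\delta^{(1)}_{i}e_{i},\dots,\sum_{i}\delta^{(m-1)}_{i}e_{i},e_{i_{m}})\Vert_{Y}$. Summing over the distinguished index $i_{m}$ with exponent $r$, one observes that for \emph{fixed} sign vectors the $X$-valued sequence $\big(L(\sum_{i}\delta^{(1)}_{i}e_{i},\dots,e_{i_{m}})\big)_{i_{m}}$ has weak-$\ell_{1}$ norm at most $\Vert L\Vert$ (again because unimodular linear combinations of the $e_{i}$ have $\ell_\infty^{n}$-norm $1$), so the $(r,1)$-summing property of $v$ gives an $\ell_{r}$-bound of $\pi_{(r,1)}(v)\Vert L\Vert$, \emph{uniformly in the signs}; a final Minkowski step (or H\"older, when $r>2$) restores the average and the bound persists. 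Hence the $k$-th block holds with constant $C_{q}(Y)^{m-1}K^{m-1}\pi_{(r,1)}(v)\le\mathcal{C}^{m}$, the summing norm $\pi_{(r,1)}(v)$ entering only to the first power.

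Finally, feeding these $m$ blocks (all equal, by symmetry of $L$), each of the form ``$\ell_{r}$ in one position and $\ell_{q}$ in the remaining $m-1$'', into a generalized Blei / mixed-sum interpolation inequality of the kind exploited in \cite{bps,DMS} yields $\big(\sum_{\mathbf{i}}\Vert v\,L(e_{\mathbf{i}})\Vert_{Y}^{\rho}\big)^{1/\rho}\le\mathcal{C}^{m}\Vert L\Vert$ with $\tfrac{m}{\rho}=\tfrac{1}{r}+\tfrac{m-1}{q}$, i.e.\ precisely $\rho=\tfrac{qmr}{q+(m-1)r}$; the \cite{DefAnnals}-type induction above then upgrades this to the asserted polynomial estimate. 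I expect the core estimate of the second paragraph to be the main obstacle: keeping every constant both dimension-free and of the form $\mathcal{C}^{m}$ forces one to carry a single joint average over all sign vectors at once (estimating variable by variable would cost a factor $n$) and to handle the order of the Minkowski interchanges carefully when $r>2$; a secondary but also delicate point is verifying that the slices $R_{j}$ in the inductive splitting inherit the $q$-concavity and $(r,1)$-summing structure needed for that induction to close.
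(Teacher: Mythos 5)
The paper never proves this statement: Theorem~\ref{dms} is quoted verbatim from \cite[Theorem 5.3]{DMS}, so there is no in-paper proof to compare against; the closest internal evidence of the intended machinery is the proof of Theorem~\ref{2convex}. Measured against that machinery, your multilinear core is fine: iterating the cotype-$q$ inequality of $Y$ (legitimate, since $q$-concave with $q\ge 2$ implies cotype $q$) together with \thmref{kahane} over $m-1$ coordinates, using the $(r,1)$-summing property of $v$ in the distinguished coordinate via the weak-$\ell_1$ bound $\Vert (L(w^{(1)},\dots,w^{(m-1)},e_{i}))_i\Vert_{w,1}\le\Vert L\Vert$, and then feeding the $m$ mixed blocks into Lemma~\ref{blei.interp} with $k=1$, $s=r$ does give the exponent $\rho=\frac{qmr}{q+(m-1)r}$ with a constant of the form $\mathcal{C}^m\Vert L\Vert$ (you also correctly read $P$ as $X$-valued, silently fixing a typo in the statement).

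The genuine gap is the passage from this multilinear estimate to the polynomial one. The coefficients satisfy $c_\alpha=|\mathbf{i}_\alpha|\,L(e_{\mathbf{i}_\alpha})$ with $|\mathbf{i}_\alpha|=m!/\alpha!$ as large as $m!$, and polarization only gives $\Vert L\Vert\le\frac{m^m}{m!}\Vert P\Vert_{\mathbb{D}^n}$; applying your multilinear bound naively therefore costs an extra factor of order $(m!)^{1-1/\rho}$, which is superexponential because here $\rho\to q$ as $m\to\infty$ and $q$ may exceed $2$ (so even the scalar trick $|\mathbf{i}|^{\rho-1}\le|\mathbf{i}|^{\rho/2}$ used when $\rho\le 2$ is unavailable). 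You delegate exactly this point to ``an induction on the degree and the number of variables as in \cite{DefAnnals}'', but that is a misattribution --- the proof in \cite{DefAnnals} is hypercontractive, not inductive --- and you give no argument that such a slicing induction closes with constants $\mathcal{C}^m$; historically, reductions of this type are where superexponential losses occur, and your own closing caveat about the slices $R_j$ concedes the step is unverified. The proofs that actually work (in \cite{DMS}, and in the paper's Theorem~\ref{2convex}) absorb the combinatorial weights by grouping the inner variables into the partial polynomials $P_{\mathbf{i}_S}(z)=\sum_{\mathbf{i}_{\widehat{S}}\in\mathcal{J}}|\mathbf{i}_{\widehat{S}}|\,v a_{\mathbf{i}}\,z_{\mathbf{i}_{\widehat{S}}}$, whose coefficients already carry the weights (and the bookkeeping closes precisely because $\rho<q$, so $|\mathbf{i}|^{1-1/\rho}\lesssim m\,|\mathbf{i}_{\widehat{S}}|^{1-1/q}$), and by estimating their $\ell_q$-coefficient norms through the lattice-valued Bayart--Weissler hypercontractive inequality (Theorem~\ref{bay_in}, via $q$-concavity and the Krivine-calculus $p$-th power construction). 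That hypercontractive ingredient is entirely absent from your outline, and without it, or a genuinely worked-out substitute for the polynomial reduction, the claimed bound $\mathcal{C}^m\Vert P\Vert_{\mathbb{D}^n}$ is not established.
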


A special case of Banach lattices are the Banach function spaces. Throughout
this paper we fix a positive, finite measure space $\left( \Omega ,\Sigma
,\mu \right) $. Let $\text{sim }\Sigma $ denote the vector space of all
complex-valued $\Sigma $-simple functions and $L^{0}\left( \mu \right) $ be
the space of all (equivalence classes of) complex-valued $\Sigma $%
-measurable functions modulo $\mu $-null functions. The space $L^{0}\left(
\mu \right) $ is a complex vector lattice \cite[p. 18]{OkRiSan}. An order
ideal $X\left( \mu \right) \subset L^{0}\left( \mu \right) $ is called a 
\emph{Banach function space} (B.f.s. for short) based on the measure space $%
\left( \Omega ,\Sigma ,\mu \right) $ if $\text{sim }\Sigma \subseteq X\left(
\mu \right) $ and $X\left( \mu \right) $ is equipped with a lattice complete
norm $\left\Vert \cdot \right\Vert _{X\left( \mu \right) }$.

Now we remind the notion of a $p$-th power of a Banach function space as
presented in \cite{OkRiSan}. Suppose that $\left( X\left( \mu \right)
,\left\Vert \cdot \right\Vert _{X\left( \mu \right) }\right) $ is a $p$%
-convex B.f.s. with $p$-convexity constant equals to $1$ (if $M^{p}\left(
X\left( \mu \right) \right) \not=1$, one may renorm de space in order to
obtain it \cite[Proposition 1.d.8]{LindTzaf}), for some $1\leq p\leq 2$. The 
\emph{$p$-th power} is defined as 
\begin{equation*}
X\left( \mu \right) _{[p]}:=\left\{ f\in L^{0}\left( \mu \right) :\left\vert
f\right\vert ^{\frac{1}{p}}\in X\left( \mu \right) \right\}
\end{equation*}%
and%
\begin{equation*}
\left\Vert f\right\Vert _{X\left( \mu \right) _{[p]}}:=\left\Vert \left\vert
f\right\vert ^{\frac{1}{p}}\right\Vert _{X\left( \mu \right) }^{p},\text{ \
\ }f\in X\left( \mu \right) _{[p]}\text{.}
\end{equation*}%
It is well-known that $\left\Vert \cdot \right\Vert _{X\left( \mu \right)
_{[p]}}$ defines a lattice complete norm on the order ideal $X\left( \mu
\right) _{[p]}$ of the vector lattice $L^{0}\left( \mu \right) $, with $%
\text{sim }\Sigma \subseteq X\left( \mu \right) _{[p]}$. In other words, $%
\left( X(\mu )_{[p]},\left\Vert \cdot \right\Vert _{X(\mu )_{[p]}}\right) $
is a B.f.s..

\bigskip

In Section 3 we improve the hypercontractive estimate of Theorem \ref{dms} for the particular case of $2$-concave and $2$-convex B.f.s..

\vskip.3cm

We also need some useful multi-index notation: for positive integers $m,n$,
we set 
\begin{align*}
\mathcal{M}(m,n)& :=\left\{ \mathbf{i}=(i_{1},\dots ,i_{m});\,i_{1},\dots
,i_{m}\in \{1,\dots ,n\}\right\} , \\
\mathcal{J}(m,n)& :=\left\{ \mathbf{i}\in \mathcal{M}(m,n);\,i_{1}\leq
i_{2}\leq \dots \leq i_{m}\right\} ,
\end{align*}%
and for $k=1,\dots ,m$, $\mathcal{P}_{k}(m)$ denotes the set of the subsets
of $\{1,\dots ,m\}$ with cardinality $k$. For $S=\{s_{1},\dots ,s_{k}\}\in 
\mathcal{P}_{k}(m)$, its complement will be $\widehat{S}:=\{1,\dots
,m\}\setminus S$, and $\mathbf{i}_{S}$ shall mean $(i_{s_{1}},\dots
,i_{s_{k}})\in \mathcal{M}(k,n)$. For a multi-indexes $\mathbf{i}\in 
\mathcal{M}(m,n)$, we denote by $|\mathbf{i}|$ the cardinality of the set of
multi-index $\mathbf{j}\in \mathcal{M}(m,n)$ such that there is a
permutation $\sigma $ of $\{1,\dots ,m\}$ such that $i_{\sigma (k)}=j_{k}$,
for every $k=1,\dots ,m$.

\vskip.3cm

The following powerful generalization of the Blei inequality will be crucial
for our estimates.

\begin{lemma}[Bayart, Pellegrino, Seoane]
\label{blei.interp} Let $m,n$ positive integers, $1\leq k\leq m$ and $1\leq
s\leq q$. Then for all scalar matrix $\left( a_{\mathbf{i}}\right) _{\mathbf{%
i}\in\mathcal{M}(m,n)}$,

\begin{equation*}
\left( \sum_{\mathbf{i}\in \mathcal{M}(m,n)}\left\vert a_{\mathbf{i}%
}\right\vert^\frac{msq}{kq+(m-k)s} \right)^\frac{kq+(m-k)s}{msq} \leq
\prod_{S\in \mathcal{P}_k (m)} \left( \sum_{\mathbf{i}_{S}} \left( \sum_{%
\mathbf{i}_{\widehat{S}}} \left\vert a_{\mathbf{i}}\right\vert^q
\right)^\frac sq \right)^{\frac1s \cdot \frac{1}{\binom{m}{k}}}.
\end{equation*}
\end{lemma}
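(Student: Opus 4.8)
The plan is to prove the inequality by induction on $k$, using a two-step interpolation argument that reduces the general $k$ to the base case $k=1$, which is itself the classical Blei inequality (or the mixed-$(\ell_1,\ell_2)$ form of the Khinchine--Steinhaus inequality applied coordinate-wise). For the base case $k=1$, the exponent $\frac{msq}{q+(m-1)s}$ on the left is precisely the value obtained by interpolating, for each coordinate $j\in\{1,\dots,m\}$, between the "pure $\ell_s$ in the $j$-th variable / $\ell_q$ in the rest" norms; one applies the generalized Blei--Steinhaus partition inequality once and then takes the geometric mean over $j=1,\dots,m$, which accounts for the product $\prod_{S\in\mathcal P_1(m)}$ with exponent $\frac1s\cdot\frac1m$. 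I would either cite this directly from \cite{bps} or reproduce the short Hölder-with-multiple-exponents computation.

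For the inductive step, suppose the statement holds for some $k$ with $1\le k<m$; I want to deduce it for $k+1$. The idea is to split each set $S\in\mathcal P_{k+1}(m)$ as $S=S'\cup\{t\}$ with $S'\in\mathcal P_k(m)$ and $t\in\widehat{S'}$, and to interpolate the "block" norm indexed by $S$ between the block norm indexed by $S'$ (available from the induction hypothesis, after first summing out one more inner variable) and the block norm indexed by $\{t\}$ (the $k=1$ case). Concretely, I would first apply the induction hypothesis to the matrix $\big(a_{\mathbf i}\big)$ regarded with an appropriate choice of inner/outer variables, obtaining the exponent $\lambda_k:=\frac{msq}{kq+(m-k)s}$ as a product over $\mathcal P_k(m)$; then apply Hölder's inequality in the outer variables to each factor to trade a factor of the $k=1$ estimate for a factor of the $k+1$ estimate, checking that the resulting exponent on the left is exactly $\lambda_{k+1}=\frac{msq}{(k+1)q+(m-k-1)s}$. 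The bookkeeping device that makes this clean is the standard one from Blei's proof: introduce the weights so that each index $j\in\{1,\dots,m\}$ is "outer" in exactly the right proportion of the partition sets, and use that $\binom{m}{k+1}=\binom{m}{k}\cdot\frac{m-k}{k+1}$ to balance the multiplicities; the inequality relating the arithmetic and geometric parts is just repeated Hölder.

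An alternative, and perhaps cleaner, route is to prove the statement in one shot by writing $\frac{1}{\lambda_k}=\frac{k}{m}\cdot\frac1q+\frac{m-k}{m}\cdot\frac1s$ and recognizing that the claimed bound is exactly what comes out of the mixed-norm interpolation inequality of Blei--Fournier type: for exponents $\mathbf p=(p_1,\dots,p_m)$ each equal to either $q$ or $s$, with exactly $m-k$ of them equal to $s$, one has $\|a\|_{\ell_{\lambda_k}}\le \prod \|a\|_{\ell_{\mathbf p}}^{1/\binom{m}{k}}$ where the product runs over all the $\binom{m}{k}$ ways of choosing which coordinates get exponent $s$, and $\|a\|_{\ell_{\mathbf p}}$ denotes the iterated mixed norm $\big(\sum_{i_{j_1}}\cdots\big(\sum\cdots |a_{\mathbf i}|^q\big)^{s/q}\cdots\big)^{1/s}$ with the $s$-sums outermost. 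Reorganizing which sums are innermost (all the $q$-sums) versus outermost (the $s$-sums) is legitimate here because $s\le q$, so Minkowski's integral inequality lets us pull the $\ell_{s/q}$-type norms outside; this identification turns the right-hand side of the lemma into a standard mixed-norm object.

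The main obstacle I expect is the exponent bookkeeping in the inductive (or interpolation) step: verifying that the Hölder exponents are admissible (sum of reciprocals equals one), that the multiplicities $\binom{m}{k}$ combine correctly when passing from $k$ to $k+1$, and that the repeated application of Minkowski's inequality to interchange the order of the $q$-sums and $s$-sums is valid precisely under the hypothesis $s\le q$ (which is why this hypothesis appears). None of these steps is deep, but the indices are easy to mishandle, so I would set up notation for the partition weights carefully before computing, and sanity-check the final exponent $\frac{kq+(m-k)s}{msq}$ in the extreme cases $k=m$ (which must give $\|a\|_{\ell_q}\le \|a\|_{\ell_q}$) and $s=q$ (which must give the trivial identity $\|a\|_{\ell_q}=\|a\|_{\ell_q}$ on both sides).
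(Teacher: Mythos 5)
The paper does not prove this lemma at all: it is quoted verbatim from \cite{bps}, so the only fair comparison is with the proof given there. Your third paragraph is essentially that proof: write the target exponent as a convex combination of the mixed exponents attached to the sets $S\in\mathcal{P}_k(m)$ with equal weights $\frac{1}{\binom{m}{k}}$, apply the mixed-norm interpolation inequality, and use Minkowski's inequality (valid precisely because $s\le q$) to push the $s$-sums outside so that each factor becomes the block expression on the right-hand side. Your first two paragraphs (induction on $k$, ``trading'' a $k=1$ factor for a $k+1$ factor via H\"older) are too vague to check and are not needed once the one-shot argument is in place; I would drop them.

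There is, however, a concrete error in the exponent bookkeeping of the route you intend to carry out. In each factor of the right-hand side the \emph{outer} $\ell_s$-sum runs over $\mathbf{i}_S$ with $|S|=k$, so exactly $k$ coordinates carry the exponent $s$ and the $m-k$ coordinates of $\widehat{S}$ carry the inner exponent $q$ --- not ``$m-k$ of them equal to $s$'' as you wrote. Since a fixed coordinate $j$ lies in $S$ for $\binom{m-1}{k-1}$ of the $\binom{m}{k}$ sets, i.e.\ with frequency $\frac{k}{m}$, the correct convexity identity is $\frac{1}{\lambda_k}=\frac{k}{m}\cdot\frac{1}{s}+\frac{m-k}{m}\cdot\frac{1}{q}=\frac{kq+(m-k)s}{msq}$, not $\frac{k}{m}\cdot\frac{1}{q}+\frac{m-k}{m}\cdot\frac{1}{s}$; and consistently with this, your sanity check at $k=m$ is wrong: there $\lambda_m=s$ and both sides are $\ell_s$-norms, not $\ell_q$-norms. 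The swap is systematic (it appears in the identity, in the count of $s$-coordinates, and in the $k=m$ check), so as written the central verification that the interpolated exponent equals $\frac{msq}{kq+(m-k)s}$ fails; the fix is mechanical, but it is exactly the bookkeeping you flagged as the crux, so it must be redone with the roles of $S$ and $\widehat{S}$ straightened out. The Minkowski step and the hypothesis $s\le q$ are used in the right direction.
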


\vskip.3cm

\bigskip

\section{A subpolynomial multilinear Bohnenblust-Hille inequality on Hilbert
lattices}

The multilinear Bohnenblust-Hille inequality was originally proved in 1931
with constants with exponential growth. It was only in 2012 that Pellegrino
and Seoane, based on previous work of Defant, Popa and Schwarting, provided
(surprising) constants with subpolynomial growth (see \cite{popa,diniz,ps}).
In this section we show that for some vector-valued cases the
Bohnenblut-Hille type inequality also has constants with subpolynomial
growth. Our results improve some estimates from \cite{abps.hlw}, in some
special cases.

\vskip.3cm

Thanks to Grothendieck we know that every continuous linear operator from $%
\ell_{1}$ to any Hilbert space is absolutely summing. This result is one of
the several consequences of the famous Grothendieck's inequality, called
\textquotedblleft the fundamental theorem the metric theory of tensor
products\textquotedblright. A Banach space $X$ is said \emph{GT space} if
every continuous linear operator from $X$ to $H$ is absolutely summing (thus
is $p$-summing for $1\leq p<\infty$), where $H$ stands for a Hilbert space.

\vskip.3cm

The next result from \cite[Theorem 4.3]{anss} improves the estimates on
variations of Bohnenblust-Hille inequality introduced in \cite{nps} and \cite%
[Appendix A]{npss}.

\begin{theorem}
\label{BH_variant} Let $t\in \lbrack 1,2)$ and $m>1$. Then 
\begin{equation*}
\left( \sum_{i_{1},\dots,i_{m}=1}^{\infty}\left\vert U\left(
e_{i_{1}},\dots,e_{i_{m}}\right) \right\vert ^{\frac{2tm}{2+(m-1)t}}\right)
^{\frac{2+(m-1)t}{2tm}}\leq\mathrm{C}_{m,t}^{\mathbb{C}}\left\Vert
U\right\Vert ,
\end{equation*}
for all $m$-linear forms $U:c_{0}\times\cdot\cdot\cdot\times c_{0}\rightarrow%
\mathbb{C}$, with 
\begin{equation*}
\mathrm{C}_{m,t}^{\mathbb{C}} \leq\prod_{j=2}^{m}\Gamma\left( 2-\frac {2-t}{%
jt-2t+2}\right) ^{\frac{t(j-2)+2}{2t-2jt}}.
\end{equation*}
\end{theorem}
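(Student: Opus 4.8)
The plan is to argue by induction on $m$, keeping track of constants so that the announced product of Gamma values emerges. Write $\lambda_k:=\frac{2tk}{2+(k-1)t}$ for the degree-$k$ exponent, so $\lambda_1=t$, $\lambda_m=\frac{2tm}{2+(m-1)t}$ is the exponent in the statement, and for $t\in[1,2)$ and $k\ge1$ one checks $1\le\lambda_k<2$. It suffices to prove, for each $n$, the finite inequality $\bigl(\sum_{\mathbf i\in\mathcal{M}(m,n)}|U(e_{i_1},\dots,e_{i_m})|^{\lambda_m}\bigr)^{1/\lambda_m}\le \mathrm{C}_{m,t}^{\mathbb{C}}\|U\|$ with a constant independent of $n$; letting $n\to\infty$ (the partial sums increase) yields the statement. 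The case $m=1$ is trivial with constant $1$, since a bounded linear functional on $c_0$ lies in $\ell_1$, whence $\sum_i|U(e_i)|=\|U\|$ and $\bigl(\sum_i|U(e_i)|^t\bigr)^{1/t}\le\|U\|$ because $t\ge1$. The inductive step below produces the recursion $\mathrm{C}_{m,t}^{\mathbb{C}}=\Gamma\bigl(\tfrac{\lambda_{m-1}}{2}+1\bigr)^{-1/\lambda_{m-1}}\mathrm{C}_{m-1,t}^{\mathbb{C}}$, which unwinds to $\mathrm{C}_{m,t}^{\mathbb{C}}=\prod_{j=2}^{m}\Gamma\bigl(\tfrac{\lambda_{j-1}}{2}+1\bigr)^{-1/\lambda_{j-1}}$; a short manipulation with the definition of $\lambda_k$ then identifies $\Gamma\bigl(\tfrac{\lambda_{j-1}}{2}+1\bigr)^{-1/\lambda_{j-1}}$ with $\Gamma\bigl(2-\tfrac{2-t}{jt-2t+2}\bigr)^{\frac{t(j-2)+2}{2t-2jt}}$, which is the asserted bound.

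The heart of the inductive step is a mixed-norm estimate obtained by freezing one variable. Fix $m\ge2$ and assume the degree-$(m-1)$ inequality with constant $\mathrm{C}_{m-1,t}^{\mathbb{C}}$. Given $U\in\mathcal{L}(c_0,\dots,c_0;\mathbb{C})$, set $a_{\mathbf i}:=U(e_{i_1},\dots,e_{i_m})$. For unimodular scalars $z=(z_1,\dots,z_n)$, put $V_z(x_1,\dots,x_{m-1}):=U\bigl(x_1,\dots,x_{m-1},\sum_i z_i e_i\bigr)$; since $\|\sum_i z_i e_i\|_{c_0}=1$ we get $\|V_z\|\le\|U\|$, and $V_z(e_{j_1},\dots,e_{j_{m-1}})=\sum_{i_m}a_{j_1,\dots,j_{m-1},i_m}z_{i_m}$. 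Applying the induction hypothesis to $V_z$, raising to the power $\lambda_{m-1}$, integrating $z$ over the torus $\mathbb{T}^n$, interchanging sum and integral, and then invoking term by term the sharp complex Khinchine inequality
\[
\int_{\mathbb{T}^n}\Bigl|\sum_i c_i z_i\Bigr|^{\lambda_{m-1}}\,dz\ge\Gamma\Bigl(\tfrac{\lambda_{m-1}}{2}+1\Bigr)\Bigl(\sum_i|c_i|^2\Bigr)^{\lambda_{m-1}/2},
\]
one obtains
\[
\Bigl(\sum_{\mathbf i_{\widehat{\{k\}}}}\Bigl(\sum_{i_k}|a_{\mathbf i}|^2\Bigr)^{\lambda_{m-1}/2}\Bigr)^{1/\lambda_{m-1}}\le\Gamma\Bigl(\tfrac{\lambda_{m-1}}{2}+1\Bigr)^{-1/\lambda_{m-1}}\mathrm{C}_{m-1,t}^{\mathbb{C}}\|U\|
\]
for $k=m$, hence, after relabelling the variables of $U$, for every $k\in\{1,\dots,m\}$: the index $i_k$ carries exponent $2$ and the remaining $m-1$ indices carry exponent $\lambda_{m-1}$.

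These $m$ estimates are glued together by \lemref{blei.interp}, applied with the roles of its parameters $k,q,s$ played by $m-1,\,2,\,\lambda_{m-1}$ (admissible, since $1\le\lambda_{m-1}<2$). A direct computation using $\lambda_{m-1}=\frac{2t(m-1)}{(m-2)t+2}$ shows that the left-hand exponent $\frac{2m\lambda_{m-1}}{2(m-1)+\lambda_{m-1}}$ furnished by the lemma equals exactly $\lambda_m$; moreover $\mathcal{P}_{m-1}(m)$ consists of the $m$ sets $\{1,\dots,m\}\setminus\{k\}$ (whose complements are the singletons $\{k\}$) and $\binom{m}{m-1}=m$, so the lemma reads
\[
\Bigl(\sum_{\mathbf i}|a_{\mathbf i}|^{\lambda_m}\Bigr)^{1/\lambda_m}\le\prod_{k=1}^{m}\Bigl[\Bigl(\sum_{\mathbf i_{\widehat{\{k\}}}}\Bigl(\sum_{i_k}|a_{\mathbf i}|^2\Bigr)^{\lambda_{m-1}/2}\Bigr)^{1/\lambda_{m-1}}\Bigr]^{1/m}.
\]
Inserting the mixed-norm bound into each of the $m$ factors gives $\bigl(\sum_{\mathbf i}|a_{\mathbf i}|^{\lambda_m}\bigr)^{1/\lambda_m}\le\Gamma(\tfrac{\lambda_{m-1}}{2}+1)^{-1/\lambda_{m-1}}\mathrm{C}_{m-1,t}^{\mathbb{C}}\|U\|$, which is the recursion; this closes the induction, anchored at $m=2$, where the one-term product gives $\mathrm{C}_{2,t}^{\mathbb{C}}=\Gamma(1+\tfrac t2)^{-1/t}$.

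The step requiring the most care is the mixed-norm estimate: one must apply the induction hypothesis to the frozen form $V_z$ with the bound $\|V_z\|\le\|U\|$, and extract from the complex Khinchine inequality exactly its optimal constant $\Gamma(\tfrac{\lambda_{m-1}}{2}+1)^{1/\lambda_{m-1}}$, whose value is valid for exponents in $[1,2]$. The two remaining points---that the Blei exponent $\frac{2m\lambda_{m-1}}{2(m-1)+\lambda_{m-1}}$ collapses to $\lambda_m$, and that $\Gamma(\tfrac{\lambda_{j-1}}{2}+1)^{-1/\lambda_{j-1}}=\Gamma\bigl(2-\tfrac{2-t}{jt-2t+2}\bigr)^{\frac{t(j-2)+2}{2t-2jt}}$---are routine algebra with the definition of $\lambda_k$ and present no real difficulty.
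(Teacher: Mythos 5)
Your proof is correct, and it is worth noting that the paper itself does not prove this statement at all: it quotes it from \cite[Theorem 4.3]{anss}, where the result is obtained by a rather different mechanism, namely an interpolation theorem for multiple summing multilinear operators. Your argument is instead the direct Bayart--Pellegrino--Seoane-style induction (the $t=1$ case of your recursion is exactly the proof of the classical multilinear constants $\prod_{j=2}^m\Gamma(2-\frac1j)^{\frac{j}{2-2j}}$ in \cite{bps}), carried out with the base exponent $t$ in place of $1$: freeze one variable against Steinhaus coefficients, apply the degree-$(m-1)$ case to $V_z$ with $\Vert V_z\Vert\le\Vert U\Vert$, integrate and use the sharp lower Khinchine--Steinhaus bound with constant $\Gamma(\frac{\lambda_{m-1}}{2}+1)^{1/\lambda_{m-1}}$, and glue the $m$ resulting mixed $(\lambda_{m-1},2)$-norm estimates with \lemref{blei.interp} for $k=m-1$, $q=2$, $s=\lambda_{m-1}$. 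I checked the two algebraic identities you defer: the Blei exponent $\frac{2m\lambda_{m-1}}{2(m-1)+\lambda_{m-1}}$ does collapse to $\lambda_m$, and $\Gamma(\frac{\lambda_{j-1}}{2}+1)^{-1/\lambda_{j-1}}$ does equal $\Gamma(2-\frac{2-t}{jt-2t+2})^{\frac{t(j-2)+2}{2t-2jt}}$, so the recursion reproduces exactly the stated constant, with the correct anchor $\mathrm{C}_{2,t}^{\mathbb{C}}=\Gamma(1+\frac t2)^{-1/t}$. The only point you should make explicit with a citation is the lower Steinhaus--Khinchine inequality with constant $\Gamma(\frac p2+1)^{1/p}$ for $p\in[1,2)$ (Sawa for $p=1$, K\"onig in general; this is the constant $A_p$ used throughout \cite{bps}); note that you only need its validity, not its optimality, and that $\lambda_{m-1}\in[t,2)\subset[1,2)$ so you are in the admissible range. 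What each approach buys: the interpolation route of \cite{anss} is shorter given their general theorem and yields such variants uniformly, while your route is self-contained, uses only tools already present in this paper, and makes transparent where each Gamma factor comes from.
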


Theses estimates combined with the Grothendieck's Theorem lead us to improve
the constants of some special cases from \cite[Theorem 6.1]{abps.hlw}:

\begin{theorem}
Let $H$ be a Hilbert space, $r\in \lbrack 1,2)$ and $v:\ell _{1}\rightarrow
H $ a linear operator. Then 
\begin{equation*}
\left( \sum_{i_{1},\dots ,i_{m}=1}^{\infty }\left\Vert vT\left(
e_{i_{1}},...,e_{i_{m}}\right) \right\Vert _{H}^{\frac{2rm}{2+\left(
m-1\right) r}}\right) ^{\frac{2+\left( m-1\right) r}{2rm}}\leq \mathrm{C}%
_{m,r}^{\mathbb{C}}\pi _{\frac{2r(m}{2+(m-1)r}}(v)\left\Vert T\right\Vert ,
\end{equation*}%
for all $m$-linear operators $T:c_{0}\times \cdots \times c_{0}\rightarrow $ 
$\ell _{1}$, with $\mathrm{C}_{m,r}^{\mathbb{C}}$ as in Theorem \ref%
{BH_variant}.
\end{theorem}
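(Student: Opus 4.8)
The plan is to pass through the weak $\ell_{\rho}$-norm of the vectors $T(e_{i_{1}},\dots,e_{i_{m}})\in\ell_{1}$, where $\rho:=\frac{2rm}{2+(m-1)r}$, and then to invoke the summability of $v$. Note first that $\rho\in[1,2)$ for every $m\ge1$ and $r\in[1,2)$. Fix $N\in\mathbb{N}$ and, for each multi-index $\mathbf{i}\in\mathcal{M}(m,N)$, set $x_{\mathbf{i}}:=T(e_{i_{1}},\dots,e_{i_{m}})\in\ell_{1}$, so that $vT(e_{i_{1}},\dots,e_{i_{m}})=v(x_{\mathbf{i}})$; the argument runs over these finite families, and one lets $N\to\infty$ at the end.

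\emph{Weak-norm estimate.} I claim that $\big\|(x_{\mathbf{i}})_{\mathbf{i}\in\mathcal{M}(m,N)}\big\|_{w,\rho}\le\mathrm{C}_{m,r}^{\mathbb{C}}\,\|T\|$. Let $\psi$ belong to the closed unit ball of $(\ell_{1})'=\ell_{\infty}$. Then $\psi\circ T\colon c_{0}\times\cdots\times c_{0}\to\mathbb{C}$ is an $m$-linear form with $\|\psi\circ T\|\le\|T\|$, and $\psi(x_{\mathbf{i}})=(\psi\circ T)(e_{i_{1}},\dots,e_{i_{m}})$. Since the exponent $\rho$ is exactly the one appearing in \thmref{BH_variant} for the parameter value $t=r$, that theorem, applied to $\psi\circ T$, yields
\[
\Big(\sum_{\mathbf{i}}|\psi(x_{\mathbf{i}})|^{\rho}\Big)^{1/\rho}\le\mathrm{C}_{m,r}^{\mathbb{C}}\,\|\psi\circ T\|\le\mathrm{C}_{m,r}^{\mathbb{C}}\,\|T\|,
\]
and taking the supremum over all such $\psi$ proves the claim. (When $m=1$, where \thmref{BH_variant} is vacuous, one has $\rho=r\ge1$ and the estimate reduces to the trivial inclusion $\ell_{1}\hookrightarrow\ell_{r}$ applied to $\psi\circ T\in\ell_{1}$.)

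\emph{Summability of $v$.} By Grothendieck's theorem every bounded operator from $\ell_{1}$ into a Hilbert space is absolutely $1$-summing, hence $\rho$-summing with $\pi_{\rho}(v)\le\pi_{1}(v)<\infty$; this finiteness is really the content of the statement. Applying the defining inequality of the $\rho$-summing norm to the family $(x_{\mathbf{i}})_{\mathbf{i}\in\mathcal{M}(m,N)}$, and then the weak-norm estimate,
\[
\Big(\sum_{\mathbf{i}\in\mathcal{M}(m,N)}\|v(x_{\mathbf{i}})\|_{H}^{\rho}\Big)^{1/\rho}\le\pi_{\rho}(v)\,\big\|(x_{\mathbf{i}})_{\mathbf{i}}\big\|_{w,\rho}\le\pi_{\rho}(v)\,\mathrm{C}_{m,r}^{\mathbb{C}}\,\|T\|.
\]
Since $v(x_{\mathbf{i}})=vT(e_{i_{1}},\dots,e_{i_{m}})$ and $N$ was arbitrary, letting $N\to\infty$ completes the proof.

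There is no genuinely hard step: the argument is a sandwich of \thmref{BH_variant} with the definition of a $\rho$-summing operator, and — in contrast to the proof of \thmref{dms} — no Blei-type interpolation (\lemref{blei.interp}) is needed, because the relevant exponent has already been absorbed into \thmref{BH_variant}. The only points requiring a little care are the bookkeeping identification of $\rho$ with the exponent of \thmref{BH_variant} at $t=r$, the degenerate case $m=1$, and the observation — via Grothendieck — that $\pi_{\rho}(v)$ is automatically finite, which is what makes the inequality meaningful for an arbitrary bounded $v$.
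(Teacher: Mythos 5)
Your argument is exactly the paper's proof: Grothendieck's theorem makes $v$ absolutely summing, hence $\rho$-summing for $\rho=\frac{2rm}{2+(m-1)r}$, and the weak $\ell_{\rho}$-norm of the family $T(e_{i_1},\dots,e_{i_m})$ is controlled by applying Theorem~\ref{BH_variant} to the scalar forms $\phi\circ T$ with $\phi$ in the unit ball of $\ell_{1}'$. The only differences are cosmetic (your explicit treatment of the degenerate case $m=1$ and the finite-$N$ truncation), so the proposal is correct and essentially identical to the paper's argument.
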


\begin{proof}
By Grothendieck's theorem, $v$ is $\rho $-summing, with $\rho :=\frac{2rm}{%
2+(m-1)r}$. So, 
\begin{equation*}
\left( \sum_{i_{1},\dots ,i_{m}=1}^{\infty }\left\Vert vT\left(
e_{i_{1}},...,e_{i_{m}}\right) \right\Vert _{H}^{\rho }\right) ^{\frac{1}{%
\rho }}\leq \pi _{\rho }(v)\sup_{\phi \in B_{\ell _{1}^{\prime }}}\left(
\sum_{i_{1},\dots ,i_{m}=1}^{\infty }\left\vert \phi T\left(
e_{i_{1}},...,e_{i_{m}}\right) \right\vert ^{\rho }\right) ^{\frac{1}{\rho }%
}.
\end{equation*}%
Since the operator $\phi T$ is an $m$-linear form, Theorem \ref{BH_variant}
lead us to 
\begin{equation*}
\sup_{\phi \in B_{\ell _{1}^{\prime }}}\left( \sum_{i_{1},\dots
,i_{m}=1}^{\infty }\left\vert \phi T\left( e_{i_{1}},...,e_{i_{m}}\right)
\right\vert ^{\rho }\right) ^{\frac{1}{\rho }}\leq \mathrm{C}_{m,r}^{\mathbb{%
C}}\left\Vert T\right\Vert .
\end{equation*}%
Therefore, 
\begin{equation*}
\left( \sum_{i_{1},\dots ,i_{m}=1}^{\infty }\left\Vert vT\left(
e_{i_{1}},...,e_{i_{m}}\right) \right\Vert _{H}^{\rho }\right) ^{\frac{1}{%
\rho }}\leq \mathrm{C}_{m,r}^{\mathbb{C}}\pi _{\rho }(v)\left\Vert
T\right\Vert .
\end{equation*}
\end{proof}

Combining the definition of $GT$ spaces with the previous argument, we obtain the following result.

\begin{proposition}
\label{ell_1} Let $X$ be a $GT$ space and $H$ be a Hilbert space, $r\in
\lbrack 1,2)$ and $v:X\rightarrow H$ a linear operator. Then 
\begin{equation*}
\left( \sum_{i_{1},\dots ,i_{m}=1}^{\infty }\left\Vert vT\left(
e_{i_{1}},...,e_{i_{m}}\right) \right\Vert _{H}^{\frac{2rm}{2+\left(
m-1\right) r}}\right) ^{\frac{2+\left( m-1\right) r}{2rm}}\leq \mathrm{C}%
_{m,r}^{\mathbb{C}}\pi _{\frac{2r(m}{2+(m-1)r}}(v)\left\Vert T\right\Vert ,
\end{equation*}%
for all $m$-linear operators $T:c_{0}\times \cdots \times c_{0}\rightarrow X$%
, with $\mathrm{C}_{m,r}^{\mathbb{C}}$ as in Theorem \ref{BH_variant}.
\end{proposition}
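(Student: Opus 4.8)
The plan is to run verbatim the argument from the proof of the preceding theorem, with the $GT$ hypothesis on $X$ taking over the role that Grothendieck's theorem played for $\ell_1$. Set $\rho:=\frac{2rm}{2+(m-1)r}$. Since $r\in[1,2)$ and $m\ge 1$ one has $\rho\in[1,2)$: indeed $\rho\ge 1$ is equivalent to $r(m+1)\ge 2$, which holds because $r\ge 1$, while $2-\rho=\frac{4-2r}{2+(m-1)r}>0$ since $r<2$. This is precisely the range of exponents covered by Theorem~\ref{BH_variant}, applied with $t=r$.

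First I would invoke the definition of a $GT$ space: every bounded linear operator from $X$ into a Hilbert space is absolutely summing, hence $p$-summing for every $1\le p<\infty$. In particular $v:X\to H$ is $\rho$-summing, with finite summing norm $\pi_\rho(v)$. Applying the $\rho$-summing inequality to the finite family $\left(T(e_{i_1},\dots,e_{i_m})\right)$ indexed by the multi-indices $(i_1,\dots,i_m)\in\{1,\dots,N\}^m$, and recalling that the weak $\ell_\rho$-norm of a finite family in $X$ equals the supremum over $\phi\in B_{X'}$ of the $\ell_\rho$-norms of the scalar images, we obtain
\begin{equation*}
\left(\sum_{i_1,\dots,i_m=1}^{N}\left\Vert vT(e_{i_1},\dots,e_{i_m})\right\Vert_H^{\rho}\right)^{\frac1\rho}\le \pi_\rho(v)\sup_{\phi\in B_{X'}}\left(\sum_{i_1,\dots,i_m=1}^{N}\left\vert \phi\left(T(e_{i_1},\dots,e_{i_m})\right)\right\vert^{\rho}\right)^{\frac1\rho}.
\end{equation*}

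Next, for each fixed $\phi\in B_{X'}$ the composition $\phi\circ T:c_0\times\cdots\times c_0\to\mathbb{C}$ is an $m$-linear form with $\left\Vert\phi\circ T\right\Vert\le\left\Vert\phi\right\Vert\left\Vert T\right\Vert\le\left\Vert T\right\Vert$, so Theorem~\ref{BH_variant} with $t=r$ gives
\begin{equation*}
\left(\sum_{i_1,\dots,i_m=1}^{N}\left\vert \phi\left(T(e_{i_1},\dots,e_{i_m})\right)\right\vert^{\rho}\right)^{\frac1\rho}\le \mathrm{C}_{m,r}^{\mathbb{C}}\left\Vert T\right\Vert.
\end{equation*}
Combining the last two displays and letting $N\to\infty$ yields the claimed inequality. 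There is no genuine obstacle here: the argument is a transcription of the previous proof with $\ell_1$ replaced by the $GT$ space $X$, and the only points requiring (routine) verification are that $\rho\in[1,2)$ so that Theorem~\ref{BH_variant} is applicable, that the $GT$ property indeed upgrades $v$ to a $\rho$-summing operator, and that the weak $\ell_\rho$-norm of a finite family coincides with the stated supremum over $B_{X'}$.
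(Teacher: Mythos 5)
Your proposal is correct and follows essentially the same route as the paper: the paper obtains the Proposition by repeating verbatim the argument of the preceding theorem (the $\rho$-summing inequality applied to $v$, which is $\rho$-summing by the $GT$ property, followed by Theorem~\ref{BH_variant} applied to the scalar forms $\phi\circ T$ for $\phi\in B_{X'}$). Your additional checks (that $\rho\in[1,2)$ and that the weak $\ell_\rho$-norm is the supremum over $B_{X'}$) are harmless routine verifications that the paper leaves implicit.
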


It is worth noting that, since the optimal estimates for the constants of
Kahane's inequality are unknown (up to particular situations), the constants
in the multilinear vector-valued Bohnenblust-Hille inequality from \cite[%
Theorem 6.1]{abps.hlw} are not necessarily subpolynomial. As mentioned in 
\cite{anss}, an adaptation of the procedure presented in \cite{bps} shows
that the constants of Theorem \ref{BH_variant} are subpolynomial and,
therefore, we conclude that the estimates of Proposition \ref{ell_1} are
indeed subpolynomial. More precisely, for all $t\in \lbrack 1,2)$, there
exists a constant $\kappa _{t,\mathbb{C}}>0$ such that, for each $m>1$, 
\begin{equation*}
\mathrm{C}_{m,t}^{\mathbb{C}}\leq \kappa _{t,\mathbb{C}}\cdot m^{\frac{%
\left( \gamma -1\right) \left( t-2\right) }{2t}},
\end{equation*}%
where $\gamma $ stands for the Euler constant (see \cite[Theorem 4.4]{anss}).


\section{A subexponential polynomial Bohnenblust-Hille inequality on Banach
lattices}


In this section we present a vector-valued polynomial Bohnenblust-Hille
inequality on Banach lattices with estimates that recover the one presented
in \cite{bps}.

\medskip

To simplify the notation, we denote the B.f.s. $\left( X\left( \mu\right)
,\left\Vert \cdot\right\Vert _{X\left( \mu\right) }\right) $ by $\left(
X,\left\Vert \cdot\right\Vert _{X}\right) $. Let us denote by $\mu^{n}$ the
normalized Lebesgue finite measure on the $n$ -dimensional torus $\mathbb{T}%
^{n}:=\{(z_{1},\dots,z_{n})\in\mathbb{C}^{n};\,|z_{i}|=1\}$. The following
inequality is a combination of Krivine's calculus (as presented, \emph{e.g.}%
, in \cite[p.40-42]{LindTzaf}) and a result due to Bayart \cite[Theorem 9]%
{bayart} (see, for instance, \cite[p. 61]{ursula}).

\begin{theorem}[Bayart--Weissler inequality]
\label{bay_in} Let $0<p<q<\infty $. For every $M$-homogeneous polynomial $%
P(z)=\sum_{|\alpha |=m}c_{\alpha }z^{\alpha }$ on $\mathbb{C}^{n}$ with
values in a Banach lattice $Y$, 
\begin{equation*}
\left( \int_{\mathbb{T}^{n}}\left\vert P(z)\right\vert ^{q}d\mu
^{n}(z)\right) ^{\frac{1}{q}}\leq \left( \frac{q}{p}\right) ^{\frac{m}{2}%
}\left( \int_{\mathbb{T}^{n}}\left\vert P(z)\right\vert ^{p}d\mu
^{n}(z)\right) ^{\frac{1}{p}}.
\end{equation*}
\end{theorem}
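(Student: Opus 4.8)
The plan is to deduce the Banach lattice statement from its scalar counterpart --- the inequality of Bayart \cite[Theorem 9]{bayart} (see also \cite[p.~61]{ursula}) --- by transporting it through Krivine's functional calculus, in the form recalled in \cite[pp.~40--42]{LindTzaf}. First I would enumerate the finitely many coefficients of $P$ as $c_{1},\dots,c_{N}\in Y$, writing $z^{\alpha(j)}$ for the monomial carried by $c_{j}$ (so $|\alpha(j)|=m$ for every $j$), and for each $0<t<\infty$ introduce
\begin{equation*}
\varphi_{t}\colon\mathbb{C}^{N}\longrightarrow[0,\infty),\qquad
\varphi_{t}(w):=\left(\int_{\mathbb{T}^{n}}\Bigl|\sum_{j=1}^{N}w_{j}z^{\alpha(j)}\Bigr|^{t}\,d\mu^{n}(z)\right)^{1/t}.
\end{equation*}
Each $\varphi_{t}$ is continuous and absolutely $1$-homogeneous on $\mathbb{C}^{N}$, so Krivine's calculus assigns to it a well-defined element $\varphi_{t}(c_{1},\dots,c_{N})\in Y$; this is the precise meaning of the two sides of the asserted inequality, and it agrees with the naive reading --- form $|P(z)|^{t}$, integrate, and take the $1/t$-th power --- whenever $Y$ is $t$-convex, as in the applications below.

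The heart of the matter is then purely formal. Krivine's calculus $f\mapsto f(c_{1},\dots,c_{N})$ is a lattice homomorphism on the space of continuous $1$-homogeneous functions on $\mathbb{C}^{N}$, in particular it is positive, so it preserves every inequality $f\le g$ that holds pointwise. Hence it suffices to check that $\varphi_{q}(w)\le(q/p)^{m/2}\varphi_{p}(w)$ for every fixed $w\in\mathbb{C}^{N}$; but $\varphi_{q}(w)$ and $\varphi_{p}(w)$ are exactly the $L^{q}(\mathbb{T}^{n})$- and $L^{p}(\mathbb{T}^{n})$-norms of the scalar $m$-homogeneous polynomial $z\mapsto\sum_{j}w_{j}z^{\alpha(j)}$, so this is precisely the scalar Bayart--Weissler inequality --- whose constant $(q/p)^{m/2}$ itself comes from Weissler's one-variable hypercontractive estimate with dilation factor $\sqrt{p/q}$ together with the $m$-homogeneity of the polynomial. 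Combining the two steps proves the theorem.

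The one point that is not merely formal, and which I would treat with care, is the legitimacy of the $Y$-valued integral $\int_{\mathbb{T}^{n}}|P(z)|^{t}\,d\mu^{n}(z)$ and its identification with the Krivine value $\varphi_{t}(c_{1},\dots,c_{N})$ --- that is, that the calculus commutes with integration against the probability measure $\mu^{n}$. I expect to handle this by a routine approximation argument: replace the integral by finite Riemann sums over measurable partitions of $\mathbb{T}^{n}$, observe that each such sum is a finite lattice expression in $c_{1},\dots,c_{N}$ to which the calculus applies verbatim and for which the scalar identity is literal, and pass to the limit using the continuity of Krivine's calculus in the function variable (alternatively, pass to Krivine's concrete representation, in which the $c_{j}$ become genuine measurable functions on some measure space and the identity reduces to Fubini's theorem). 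Apart from this bookkeeping, the result is an immediate consequence of the cited scalar inequality and the standard monotonicity of the functional calculus, so that step is the only one I would expect to require genuine work.
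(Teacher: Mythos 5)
Your proposal follows exactly the route the paper itself indicates: the paper gives no proof of Theorem~\ref{bay_in}, attributing it to the combination of Krivine's functional calculus (as in \cite[p.~40--42]{LindTzaf}) with Bayart's scalar inequality \cite[Theorem 9]{bayart}, which is precisely your argument, with the interpretation of both sides via the calculus and the transfer of the pointwise scalar inequality by positivity spelled out correctly. So the proposal is correct and takes essentially the same approach, merely supplying the details the paper delegates to its references.
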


When $n=1$ and $Y=\mathbb{C}$, the result is due to F. B. Weissler (\cite{weissler}) and it asserts that the optimal constant is $\sqrt{\frac{2}{p}}$. The following variant of Theorem \ref{bay_in} will be useful to obtain our main result.

\begin{lemma}
Let $1\leq p\leq2\leq q<\infty$, and $\left(X,\left\Vert\cdot\right\Vert_{X}%
\right)$ a $p$-convex and $q$-concave B.f.s.. For every $m$-homogeneous
polynomial $P(z)=\sum_{|\alpha|=m}c_{\alpha}z^{\alpha}$ on $\mathbb{C}^{n}$
with values in $X$, we have 
\begin{equation*}
\left( \sum_{|\alpha|=m}\left\Vert c_{\alpha}\right\Vert _{X}^{q}\right)^{%
\frac{1}{q}}\leq\left( \frac{2}{p}\right) ^{\frac{m}{2}}M_{q}(X)\left( \int_{%
\mathbb{T}^{n}}\left\Vert P(z)\right\Vert _{X}^{p}d\mu^{n}(z)\right) ^{\frac{%
1}{p}}.
\end{equation*}
\end{lemma}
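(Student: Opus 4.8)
The plan is to combine the Bayart--Weissler inequality (Theorem~\ref{bay_in}) with the monomial coefficient estimate that one obtains from orthogonality on the torus, exploiting $q$-concavity of $X$ to pass from a lattice norm to a scalar $\ell_q$-sum. First I would invoke Theorem~\ref{bay_in} with the given $p<q$: since $X$ is a Banach lattice it applies verbatim, giving
\begin{equation*}
\left(\int_{\mathbb{T}^n}\left\Vert P(z)\right\Vert_X^q\,d\mu^n(z)\right)^{\frac1q}
\leq\left(\frac qp\right)^{\frac m2}\left(\int_{\mathbb{T}^n}\left\Vert P(z)\right\Vert_X^p\,d\mu^n(z)\right)^{\frac1p}.
\end{equation*}
Here I am using that the vector-valued version of Theorem~\ref{bay_in} is stated for $0<p<q<\infty$; the factor $(q/p)^{m/2}$ is what will eventually be replaced by $(2/p)^{m/2}$ after one more use of $q$-concavity, so the key is to choose the comparison exponent cleverly rather than using $q$ directly as the large exponent.

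The heart of the matter is bounding $\big(\sum_{|\alpha|=m}\Vert c_\alpha\Vert_X^q\big)^{1/q}$ by a constant times $\big(\int_{\mathbb{T}^n}\Vert P(z)\Vert_X^q\,d\mu^n\big)^{1/q}$, after which the displayed inequality follows by chaining with the Bayart--Weissler step and absorbing the constant $(q/2)^{m/2}$ (coming from comparing the $q$-integral with the $2$-integral) together with the explicit $(2/p)^{m/2}$. To get the coefficient bound I would recover each $c_\alpha$ by the Fourier/Cauchy formula $c_\alpha=\int_{\mathbb{T}^n}P(z)\,\overline{z^\alpha}\,d\mu^n(z)$; then by orthonormality of the characters $z^\alpha$ and the scalar Khinchine/Bessel argument applied coordinatewise through the lattice functional calculus (Krivine's calculus, as cited before Theorem~\ref{bay_in}), one obtains pointwise on $\Omega$
\begin{equation*}
\Big(\sum_{|\alpha|=m}|c_\alpha(\omega)|^2\Big)^{1/2}\leq\Big(\int_{\mathbb{T}^n}|P(z)(\omega)|^2\,d\mu^n(z)\Big)^{1/2}
\end{equation*}
as an inequality between elements of $L^0(\mu)$. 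Taking $\Vert\cdot\Vert_X$ of both sides and using $q$-concavity of $X$ on the left (to move from the lattice expression $\big\Vert(\sum|c_\alpha|^q)^{1/q}\big\Vert_X$ down to $\big(\sum\Vert c_\alpha\Vert_X^q\big)^{1/q}$, after first dominating the $\ell_2$-sum by the $\ell_q$-sum since $q\geq2$) yields the $M_q(X)$ factor, while the right-hand side, being $\big\Vert(\int_{\mathbb{T}^n}|P(z)|^2\,d\mu^n)^{1/2}\big\Vert_X$, is controlled by $\big(\int_{\mathbb{T}^n}\Vert P(z)\Vert_X^2\,d\mu^n\big)^{1/2}$ using $2$-convexity (which is implied by $p$-convexity with $p\leq2$, up to renorming constant $1$). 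One then compares the $2$-integral with the $q$-integral by Hölder / Bayart--Weissler and finally with the $p$-integral, collapsing the product of constants to exactly $(2/p)^{m/2}M_q(X)$.

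The main obstacle I anticipate is the bookkeeping of the constants: naively chaining Theorem~\ref{bay_in} from $p$ up to $q$ and then the coefficient estimate gives $(q/p)^{m/2}$ or worse, not the sharp $(2/p)^{m/2}$. The trick must be to only use Bayart--Weissler to pass from the $p$-norm up to the $2$-norm (cost $(2/p)^{m/2}$), then do the coefficient extraction at the level of the $L^2(\mathbb{T}^n;X)$ norm where Bessel's inequality is an equality-type estimate and costs nothing, and finally absorb the comparison between the scalar $\ell_2$-sum of coefficients and the scalar $\ell_q$-sum (costless since $q\geq2$) and the $q$-concavity constant $M_q(X)$ separately. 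Verifying that $2$-convexity with constant $1$ is available from the $p$-convexity hypothesis with $p\leq2$ — i.e. that a $p$-convex B.f.s.\ is $2$-convex with the same (renormalized) constant — is the one lattice-theoretic point that needs to be pinned down carefully, and it is exactly the kind of statement recorded in \cite{LindTzaf,OkRiSan}.
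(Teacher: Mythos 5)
Your first half (pointwise Parseval for the coefficients on $\Omega$, domination of the $\ell_q$-sum by the $\ell_2$-sum since $q\geq 2$, and $q$-concavity to produce the factor $M_q(X)$ and arrive at $\left\Vert \left(\int_{\mathbb{T}^n}|P(z)|^2\,d\mu^n\right)^{1/2}\right\Vert_X$) is exactly the paper's opening step. The gap is in how you leave the lattice afterwards: you pull the norm inside the integral at exponent $2$, invoking $2$-convexity, and you justify this by claiming that a $p$-convex B.f.s.\ with $p\leq 2$ is $2$-convex (with constant $1$ after renorming). That implication is false: convexity is inherited \emph{downward} ($p$-convexity implies $r$-convexity for $1\leq r\leq p$, see \cite{LindTzaf}), not upward, so the hypothesis $1\leq p\leq 2$ gives nothing at exponent $2$. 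Concretely, $L^1(\mu)$ is $1$-convex with constant $1$ but is not $2$-convex (take $n$ disjointly supported normalized functions: the left side of the $2$-convexity inequality grows like $n$, the right side like $\sqrt{n}$). Since the lemma is stated for $p$-convex spaces and is later applied in Theorem \ref{2convex} with $p=s_k<2$, your argument only proves the statement under the additional hypothesis that $X$ is $2$-convex, and even then it would introduce the $2$-convexity constant, which does not appear in the claimed bound.

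The paper's proof avoids this by never taking the $X$-norm inside the integral at exponent $2$. It applies Theorem \ref{bay_in} as an order inequality between the lattice elements $\left(\int_{\mathbb{T}^n}|P(z)|^2\,d\mu^n\right)^{1/2}$ and $\left(\int_{\mathbb{T}^n}|P(z)|^p\,d\mu^n\right)^{1/p}$ (note that the theorem is stated for the lattice modulus $|P(z)|$, not for $\Vert P(z)\Vert_X$, so your first display is not what Theorem \ref{bay_in} asserts), which by monotonicity of the lattice norm yields exactly the factor $(2/p)^{m/2}$ inside $\Vert\cdot\Vert_X$; only then does it use the $p$-convexity hypothesis, through the $p$-th power space: $\left\Vert \left(\int|P|^p\,d\mu^n\right)^{1/p}\right\Vert_X=\left\Vert \int|P|^p\,d\mu^n\right\Vert_{X_{[p]}}^{1/p}\leq\left(\int\left\Vert\, |P|^p\,\right\Vert_{X_{[p]}}d\mu^n\right)^{1/p}=\left(\int\Vert P\Vert_X^p\,d\mu^n\right)^{1/p}$. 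To repair your route you would either have to add $2$-convexity to the hypotheses or replace your exponent-$2$ step by this $X_{[p]}$ argument at exponent $p$; note also that the norm-level hypercontractive inequality from the $L^2(\mathbb{T}^n;X)$-norm down to the $L^p(\mathbb{T}^n;X)$-norm that your last step requires is not Theorem \ref{bay_in} and would need a separate justification if you want the exact constant $(2/p)^{m/2}$.
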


\begin{proof}
Using the orthonormality of the monomials $z^{\alpha }$ in $L^{2}$ and the $%
q $-concavity of $X$, with $2\leq q$, we get 
\begin{equation*}
\left( \sum_{|\alpha |=m}\left\Vert c_{\alpha }\right\Vert _{X}^{q}\right) ^{%
\frac{1}{q}}\leq M_{q}(X)\left\Vert \left( \int_{\mathbb{T}%
^{n}}|P(z)|^{2}d\mu ^{n}(z)\right) ^{\frac{1}{2}}\right\Vert _{X}.
\end{equation*}%
By Theorem \ref{bay_in}, we have 
\begin{align*}
\left( \sum_{|\alpha |=m}\left\Vert c_{\alpha }\right\Vert _{X}^{q}\right) ^{%
\frac{1}{q}}& \leq M_{q}(X)\left\Vert \left( \int_{\mathbb{T}%
^{n}}|P(z)|^{2}d\mu ^{n}\left( z\right) \right) ^{\frac{1}{2}}\right\Vert
_{X} \\
& \leq M_{q}(X)\left( \frac{2}{p}\right) ^{\frac{m}{2}}\left\Vert \left(
\int_{\mathbb{T}^{n}}|P(z)|^{p}d\mu ^{n}\left( z\right) \right) ^{\frac{1}{p}%
}\right\Vert _{X} \\
& =M_{q}(X)\left( \frac{2}{p}\right) ^{\frac{m}{2}}\left\Vert \int_{\mathbb{T%
}^{n}}|P(z)|^{p}d\mu ^{n}\left( z\right) \right\Vert _{X_{\left[ p\right]
}}^{\frac{1}{p}} \\
& \leq M_{q}(X)\left( \frac{2}{p}\right) ^{\frac{m}{2}}\left( \int_{\mathbb{T%
}^{n}}\left\Vert |P(z)|^{p}\right\Vert _{X_{\left[ p\right] }}d\mu
^{n}\left( z\right) \right) ^{\frac{1}{p}} \\
& =M_{q}(X)\left( \frac{2}{p}\right) ^{\frac{m}{2}}\left( \int_{\mathbb{T}%
^{n}}\left\Vert P(z)\right\Vert _{X}^{p}d\mu ^{n}(z)\right) ^{\frac{1}{p}}.
\end{align*}
\end{proof}

Now we present the main result of this paper, whose proof is obtained by combining the arguments used in \cite[Theorem 5.2]{bps} and \cite[Theorem 5.3]{DMS}.

\begin{theorem}
\label{2convex} Let $Y$ a Banach space, $X$ a $2$-concave and $2$-convex
B.f.s., and $v:Y\rightarrow X$ an $(r,1)$-summing operator with $1\leq r<2$.
Let us fix $1\leq k\leq m$ and define 
\begin{equation*}
\rho :=\frac{2mr}{2+(m-1)r}.
\end{equation*}%
For every $m$-homogeneous polynomial $P:\ell _{\infty }^{n}\rightarrow Y$,
defined by $P(z)=\sum_{|\alpha |=m}c_{\alpha }z^{\alpha }$, we have 
\begin{equation*}
\left( \sum_{|\alpha |=m}\left\Vert vc_{\alpha }\right\Vert _{X}^{\rho
}\right) ^{\frac{1}{\rho }}\leq \left( \frac{2}{s_{k}}\right) ^{\frac{m-k}{2}%
}\left( C_{2}\left( X\right) \right) ^{k-1}\left( \prod_{j=1}^{k-1}\mathrm{K}%
_{\frac{2rj}{2+(j-1)r},2}\right) C_{m,k}M_{2}(X)\pi _{(r,1)}(v)\Vert P\Vert
_{\mathbb{D}^{n}},
\end{equation*}%
where $\mathrm{K}_{p,2}$ is the constant of the Kahane inequality, $s_{k} := 
\frac{2kr}{2+(k-1)r}$ and $\displaystyle C_{m,k}:= \frac{m^{m}}{(m-k)^{m-k}} 
\sqrt{\frac{(m-k)!}{m!}}$.
\end{theorem}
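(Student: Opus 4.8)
The plan is to carry the Blei-decomposition argument of \cite[Theorem 5.2]{bps} into the Banach-lattice setting, estimating the two resulting blocks respectively by hypercontractivity and by the $(r,1)$-summing machinery of \cite[Theorem 5.3]{DMS}, all the while keeping the constants explicit in the present $2$-convex, $2$-concave situation. First, following the combinatorial device of \cite[Theorem 5.2]{bps}, one rewrites $\sum_{|\alpha|=m}\Vert vc_\alpha\Vert_X^\rho$ in terms of a sum over $\mathcal{M}(m,n)$ built from the symmetric $m$-linear form associated with $P$, at a combinatorial cost that will, together with a partial polarization and the multinomial bounds appearing below, assemble into $C_{m,k}$. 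One then checks that for $q=2$ and $s:=s_k=\frac{2kr}{2+(k-1)r}$ the exponent $\frac{msq}{kq+(m-k)s}$ of Lemma \ref{blei.interp} equals $\rho$, applies that lemma, and uses the permutation-invariance of the array to collapse all $\binom{m}{k}$ factors into a single one, say the one for $S=\{1,\dots,k\}$; it remains to bound $\bigl(\sum_{\mathbf{i}_S}(\sum_{\mathbf{i}_{\widehat S}}\Vert v c_{\mathbf{i}}\Vert_X^2)^{s_k/2}\bigr)^{1/s_k}$.

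For the block of the $m-k$ indices $i_{k+1},\dots,i_m$, fix $\mathbf{i}_S$; the $2$-concavity of $X$ (constant $M_2(X)$) brings the inner $\ell_2$-sum over $\mathbf{i}_{\widehat S}$ inside the lattice norm, and the orthonormality of the monomials on the torus then identifies the resulting element of $X$ with the $L^2(\mathbb{T}^n)$-average of a suitable $X$-valued $(m-k)$-homogeneous polynomial extracted from $vP$ --- here the multinomial coefficients, bounded by $(m-k)!$, appear. Theorem \ref{bay_in} with $q=2$, $p=s_k$ replaces that $L^2$-average by the $L^{s_k}$-average, producing the factor $(2/s_k)^{(m-k)/2}$, and the $s_k$-convexity of $X$ (which follows from its $2$-convexity, after renorming $X$ to have unit $s_k$-convexity constant) moves the lattice norm inside the $L^{s_k}$-integral; after Fubini one is reduced to bounding $\int_{\mathbb{T}^n}\sum_{\mathbf{i}_S}\Vert v(g_{\mathbf{i}_S}(w))\Vert_X^{s_k}\,d\mu^n(w)$, where for each fixed $w$ the map $\mathbf{i}_S\mapsto g_{\mathbf{i}_S}(w)\in Y$ is built from the coefficients of $P$.

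For the block of the $k$ indices $i_1,\dots,i_k$, and $w$ fixed, one peels off the indices one at a time as in \cite[Theorem 5.3]{DMS}. The first index is absorbed by the $(r,1)$-summing hypothesis on $v$: since $\Vert(e_i)_{i=1}^n\Vert_{w,1}=1$ in $\ell_\infty^n$ and the relevant partial evaluations are bounded $Y$-valued linear maps on $\ell_\infty^n$ with norms controlled by $\Vert P\Vert_{\mathbb{D}^n}$ (up to the polarization factor already charged to $C_{m,k}$), this yields an $\ell_r=\ell_{s_1}$-sum at the cost $\pi_{(r,1)}(v)$. The remaining $k-1$ indices are taken in one by one via Minkowski's inequality, Khinchine's inequality in $X$ through its cotype $2$ (each step costing $C_2(X)$) and Kahane's inequality to interchange $L^p$-means of Rademacher sums (the $j$-th step costing $\mathrm{K}_{s_j,2}$); since $\frac{k}{s_k}=\frac1r+\frac{k-1}{2}$, these $k-1$ steps carry one exactly from $\ell_{s_1}$ to $\ell_{s_k}$, producing $C_2(X)^{k-1}\prod_{j=1}^{k-1}\mathrm{K}_{s_j,2}$. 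Collecting the contributions of the two blocks together with the combinatorial constant yields the asserted inequality.

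The point I expect to be delicate is making these pieces interlock: the Blei split, the lattice Bayart--Weissler step, and the $(r,1)$-summing/Kahane iteration must be arranged so that it is $L^{s_k}$-averages --- not $L^2$-averages --- that are produced exactly where needed, so that the final bound really is by the sup-norm $\Vert P\Vert_{\mathbb{D}^n}$, and the various multinomial and polarization factors must be tracked accurately enough to collapse to precisely $C_{m,k}=\frac{m^m}{(m-k)^{m-k}}\sqrt{(m-k)!/m!}$. A further subtlety is that renorming $X$ to have unit $s_k$-convexity constant must be done so as not to spoil the constants $M_2(X)$ and $C_2(X)$ (or the effect absorbed into the statement).
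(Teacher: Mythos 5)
Your proposal is correct and follows essentially the same route as the paper: the Blei-type Lemma~\ref{blei.interp} with $q=2$, $s=s_k$ applied to the symmetric multilinear form, the $2$-concave/$2$-convex Bayart--Weissler step (the paper's Lemma~3.2, which you re-derive inline) for the $m-k$ block with cost $(2/s_k)^{(m-k)/2}M_2(X)$, the $(r,1)$-summing/cotype-$2$/Kahane estimate for the $k$ block with cost $(C_2(X))^{k-1}\prod_{j=1}^{k-1}\mathrm{K}_{s_j,2}\,\pi_{(r,1)}(v)$, and Harris polarization plus the bound $|\mathbf{i}|\leq|\mathbf{i}_{\widehat S}|\,m!/(m-k)!$ assembling into $C_{m,k}$. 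The only cosmetic differences are that you collapse the $\binom{m}{k}$ Blei factors by symmetry where the paper bounds each factor uniformly, and you unpack the $k$-index estimate by the index-peeling induction instead of citing \cite[Theorem 6.1 and Corollary 6.3]{abps.hlw} as the paper does.
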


\begin{proof}
Let us fix $r<2$ and define $s_{k}<2$ by $s_{k}=\frac{2kr}{2+(k-1)r}$. Let $L: \ell_{\infty}^{n} \times\dots\times \ell_{\infty}^{n}\to Y$ be the unique $m$-linear symmetric bounded operator associated to $P$. Thus, $c_{\alpha}=|\mathbf{i}_{\alpha}|a_{\mathbf{i}_{\alpha}}$, with $a_{\mathbf{i}_{\alpha}} := L e_{\mathbf{i}_{\alpha}}$, for some $\mathbf{i}_{\alpha}\in\mathcal{J}(m,n)$. Since $|\mathbf{i}|^{\rho-1} \leq \left( |\mathbf{i}|^{\frac{1}{2}}\right)^{\rho}$, using Lemma \ref{blei.interp} with $s=s_{k}<2$, we get
\begin{align*}
\left( \sum_{|\alpha|=m}\Vert vc_{\alpha}\Vert_{X}^{\rho}\right)^{\frac {1}{\rho}}
& =\left( \sum_{\mathbf{i}\in\mathcal{J}(m,n)} 
       \Vert |\mathbf{i}|va_{\mathbf{i}} \Vert_{X}^{\rho}\right) ^{\frac{1}{\rho}} \\
& =\left( \sum_{\mathbf{i}\in\mathcal{M}(m,n)}\frac{1}{|\mathbf{i}|} \Vert|
\mathbf{i}|va_{\mathbf{i}}\Vert_{X}^{\rho}\right) ^{\frac{1}{\rho}} \\
& \leq\left( \sum_{\mathbf{i}\in\mathcal{M}(m,n)}\left\Vert |\mathbf{i}|^{%
\frac{1}{2}}va_{\mathbf{i}}\right\Vert _{X}^{\rho}\right) ^{\frac{1}{\rho }}
\\
& \leq\left[ \prod_{S\in\mathcal{P}_{k}}\left( \sum_{\mathbf{i}_{S}\in%
\mathcal{M}(k,n)}\left( \sum_{\mathbf{i}_{\widehat{S}}\in\mathcal{M}%
(m-k,n)}\left\Vert |\mathbf{i}|^{\frac{1}{2}}va_{\mathbf{i}}\right\Vert
_{X}^{2}\right) ^{\frac{s_{k}}{2}}\right) ^{\frac{1}{s_{k}}}\right] ^{\frac{1%
}{\binom{m}{k}}}.
\end{align*}

Note that $|\mathbf{i}|\leq|\mathbf{i}_{\widehat{S}}|\frac{m!}{(m-k)!}$.
Thus 
\begin{equation*}
\left( \sum_{|\alpha|=m}\Vert vc_{\alpha}\Vert_{X}^{\rho}\right)^{\frac {1}{%
\rho}} \leq \sqrt{\frac{m!}{(m-k)!}} \left[ \prod_{S\in\mathcal{P}_{k}}
\left( \sum_{\mathbf{i}_{S}\in\mathcal{M}(k,n)}\left( \sum_{\mathbf{i}_{%
\widehat{S}}\in\mathcal{M}(m-k,n)}\left\Vert |\mathbf{i}_{\widehat{S}}|^{%
\frac{1}{2}}va_{\mathbf{i}}\right\Vert _{X}^{2}\right) ^{\frac{s_{k}}{2}%
}\right) ^{\frac{1}{s_{k}}}\right] ^{\frac {1}{\binom{m}{k}}}
\end{equation*}

Let us fix $S\in\mathcal{P}_{k}(m)$. Notice that 
\begin{equation*}
P_{\mathbf{i}_{S}}(z) := \sum_{\mathbf{i}_{\widehat{S}}\in\mathcal{J}%
(m-k,n)} |\mathbf{i}_{\widehat{S}}| v a_{\mathbf{i}}z_{\mathbf{i}_{\widehat{S%
}}},
\end{equation*}
is a $(m-k)$-homogeneous polynomial on $X$, related with the symmetric $m$%
-linear operator $v L$, and 
\begin{equation*}
v L\left( e_{i_{S}},z_{i_{\widehat{S}}}\right) =P_{\mathbf{i}_{S}}(z),
\end{equation*}
where $z_{i_{\widehat{S}}}$ denotes $z$ on the coordinates $i_{\widehat{S}}$%
. Since $X$ is $2$-convex and $s_k<2$, it also is $s_k$-convex. Then Lemma
3.2 lead us to 
\begin{align*}
& \left( \sum_{\mathbf{i}_{S}\in\mathcal{M}(k,n)} \left( \sum_{\mathbf{i}_{%
\widehat{S}}\in\mathcal{M}(m-k,n)} \left\Vert |\mathbf{i}_{\widehat{S}}|^{%
\frac{1}{2}} v a_{\mathbf{i}}\right\Vert_{X}^{2} \right)^{\frac{s_{k}}{2}}
\right)^{\frac{1}{s_{k}}} \\
& = \left( \sum_{\mathbf{i}_{S}\in\mathcal{M}(k,n)} \left( \sum_{\mathbf{i}_{%
\widehat{S}} \in \mathcal{J}(m-k,n)} \left\Vert |\mathbf{i}_{\widehat{S}}| v
a_{\mathbf{i}}\right\Vert_{X}^{2} \right)^{\frac{s_{k}}{2}} \right)^{\frac{1%
}{s_{k}}} \\
& \leq \left(\frac{2}{s_{k}}\right)^{\frac{m-k}{2}} M_{2}(X) \left( \sum_{%
\mathbf{i}_{S}\in\mathcal{M}(k,n)} \left( \int_{\mathbb{T}^{n}} \left\Vert
\sum_{\mathbf{i}_{\widehat{S}}\in\mathcal{J}(m-k,n)} \left\vert\mathbf{i}_{%
\widehat{S}}\right\vert v a_{\mathbf{i}} z_{\mathbf{i}_{\widehat{S}}}
\right\Vert_X^{s_{k}}\,dz \right) \right)^{\frac{1}{s_{k}}} \\
& = \left(\frac{2}{s_{k}}\right)^{\frac{m-k}{2}} M_{2}(X) \left( \int_{%
\mathbb{T}^{n}} \left( \sum_{\mathbf{i}_{S}\in\mathcal{M}(k,n)} \left\Vert
\sum_{\mathbf{i}_{\widehat{S}}\in\mathcal{J}(m-k,n)} \left\vert\mathbf{i}_{%
\widehat{S}}\right\vert v a_{\mathbf{i}} z_{\mathbf{i}_{\widehat{S}}}
\right\Vert_X^{s_{k}} \right)\,dz \right)^{\frac{1}{s_{k}}} \\
& = \left(\frac{2}{s_{k}}\right)^{\frac{m-k}{2}} M_{2}(X) \left( \int_{%
\mathbb{T}^{n}} \left( \sum_{\mathbf{i}_{S}\in\mathcal{M}(k,n)} \left\Vert
vL\left(e_{i_{S}},z_{i_{\widehat{S}}}\right) \right\Vert_X^{s_{k}}
\right)\,dz \right)^{\frac{1}{s_{k}}}.
\end{align*}

The operator $v$ is $(r,1)$-summing and $X$ has cotype $2$. For each $z\in 
\mathbb{T}^{n}$, $L\left( \cdot ,z_{i_{\widehat{S}}}\right) $ is a $k$%
-linear operator, thus, combining \cite[Theorem 6.1 and Corollary 6.3]%
{abps.hlw} and Harris's Polarization Formula we get 
\begin{align*}
& \left( \sum_{\mathbf{i}_{S}\in \mathcal{M}(k,n)}\left\Vert vL\left(
e_{i_{S}},z_{i_{\widehat{S}}}\right) \right\Vert _{X}^{s_{k}}\right) ^{\frac{%
1}{s_{k}}} \\
& \leq \left( C_{2}\left( X\right) \right) ^{k-1}\left( \prod_{j=1}^{k-1}%
\mathrm{K}_{\frac{2rj}{2+\left( j-1\right) r},2}\right) \pi _{\left(
r,1\right) }\left( v\right) \sup_{w^{\left( 1\right) },...,w^{\left(
k\right) }\in \mathbb{T}^{n}}\left\Vert L\left( w^{\left( 1\right)
},...,w^{\left( k\right) },z_{i_{\widehat{S}}}\right) \right\Vert , \\
& \leq \left( C_{2}\left( X\right) \right) ^{k-1}\left( \prod_{j=1}^{k-1}%
\mathrm{K}_{\frac{2rj}{2+(j-1)r},2}\right) \pi _{(r,1)}(v)\frac{(m-k)!\cdot
m^{m}}{(m-k)^{m-k}\cdot m!}\Vert P\Vert _{\mathbb{D}^{n}}.
\end{align*}%
Therefore, we obtain 
\begin{align*}
& \left( \sum_{|\alpha |=m}\Vert vc_{\alpha }\Vert _{X}^{\rho }\right) ^{%
\frac{1}{\rho }} \\
& \leq \left( \frac{2}{s_{k}}\right) ^{\frac{m-k}{2}} \sqrt{\frac{(m-k)!}{m!}%
} \frac{m^{m}}{(m-k)^{m-k}}M_{2}(X)\left( C_{2}\left(X\right) \right)^{k-1}
\left( \prod_{j=1}^{k-1} K_{\frac{2rj}{2+(j-1)r},2}\right) \pi _{(r,1)}(v)
\Vert P\Vert_{\mathbb{D}^n}.
\end{align*}
\end{proof}

As was stated in the introduction, the previous result improve the hypercontractive estimate for some particular cases of Theorem \ref{dms}. In fact, notice that $\frac{2}{s_{k}}<1+\varepsilon $ for a given $\varepsilon >0$ and $k$ large enough. Thus, proceeding as in \cite{bps} we obtain the subexponential constants in Theorem \ref{2convex}.

\begin{corollary}
\label{cor_subexp} Under the same hypotheses of the previous theorem, for
any $\varepsilon>0$, there is $\kappa>0$, such that 
\begin{equation*}
\left( \sum_{|\alpha|=m}\left\Vert vc_{\alpha}\right\Vert _{X}^{\frac {2mr}{%
2+(m-1)r}}\right) ^{\frac{2+(m-1)r}{2mr}}\leq\kappa\left(
1+\varepsilon\right) ^{m}\Vert P\Vert_{\mathbb{D}^{n}},
\end{equation*}
holds for all positive integer $n$ and every $m$-homogeneous polynomial $%
P:\ell_{\infty}^{n}\rightarrow Y$ defined by $P(z)=\sum_{|\alpha|=m}c_{%
\alpha}z^{\alpha}$.
\end{corollary}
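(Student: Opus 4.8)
\textbf{Proof proposal for Corollary~\ref{cor_subexp}.}
The plan is to derive the subexponential bound directly from the explicit constant in Theorem~\ref{2convex} by choosing the free parameter $k$ as an appropriate function of $m$ and then controlling each of the four factors in that constant. Recall that Theorem~\ref{2convex} gives, for every admissible $k$ with $1\le k\le m$,
\begin{equation*}
\left(\sum_{|\alpha|=m}\|vc_\alpha\|_X^{\rho}\right)^{1/\rho}
\le \left(\frac{2}{s_k}\right)^{\frac{m-k}{2}}
\big(C_2(X)\big)^{k-1}
\left(\prod_{j=1}^{k-1}\mathrm{K}_{\frac{2rj}{2+(j-1)r},2}\right)
C_{m,k}\,M_2(X)\,\pi_{(r,1)}(v)\,\|P\|_{\mathbb{D}^n},
\end{equation*}
with $s_k=\frac{2kr}{2+(k-1)r}$ and $C_{m,k}=\frac{m^m}{(m-k)^{m-k}}\sqrt{\frac{(m-k)!}{m!}}$. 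The quantities $M_2(X)$, $\pi_{(r,1)}(v)$ are fixed constants depending only on the data, so they go into $\kappa$. The remaining work is to show that for a suitable choice $k=k(m)$ the product of the three $m$-dependent factors is $O\big((1+\varepsilon)^m\big)$.

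First I would fix $\varepsilon>0$ and observe that, since $s_k\uparrow 2$ as $k\to\infty$ (indeed $\frac{2}{s_k}=\frac{2+(k-1)r}{kr}\to 1$), there is an integer $k_0=k_0(\varepsilon,r)$ with $\left(\frac{2}{s_{k_0}}\right)^{1/2}<1+\varepsilon$; this is exactly the remark ``$\frac{2}{s_k}<1+\varepsilon$ for $k$ large enough'' preceding the corollary. One then splits into cases: for $m\le k_0$ the left side is bounded by a constant depending only on $k_0$ and the data (finitely many cases), so it suffices to absorb them into $\kappa$; for $m>k_0$ I would take $k=k_0$ \emph{fixed}. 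With $k=k_0$ fixed, the factor $\big(C_2(X)\big)^{k-1}$ and the Kahane product $\prod_{j=1}^{k-1}\mathrm{K}_{\frac{2rj}{2+(j-1)r},2}$ are just constants (they no longer depend on $m$), hence absorbed into $\kappa$. The factor $\left(\frac{2}{s_{k_0}}\right)^{(m-k_0)/2}\le\left(\frac{2}{s_{k_0}}\right)^{m/2}<(1+\varepsilon)^m$ by the choice of $k_0$.

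The one genuinely $m$-dependent factor that must still be controlled is $C_{m,k_0}=\frac{m^m}{(m-k_0)^{m-k_0}}\sqrt{\frac{(m-k_0)!}{m!}}$, and showing this grows at most subexponentially in $m$ (for fixed $k_0$) is the main point of the estimate. I would handle it with Stirling's formula: writing $\frac{m!}{(m-k_0)!}=\prod_{i=0}^{k_0-1}(m-i)\le m^{k_0}$ gives $\sqrt{\frac{(m-k_0)!}{m!}}\ge m^{-k_0/2}$, while $\frac{m^m}{(m-k_0)^{m-k_0}}=m^{k_0}\left(\frac{m}{m-k_0}\right)^{m-k_0}=m^{k_0}\left(1+\frac{k_0}{m-k_0}\right)^{m-k_0}\le m^{k_0}e^{k_0}$. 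Hence $C_{m,k_0}\le e^{k_0}m^{k_0/2}\cdot(\text{lower-order})$, which is polynomial in $m$ of fixed degree $k_0/2$, and in particular $C_{m,k_0}=o\big((1+\varepsilon')^m\big)$ for every $\varepsilon'>0$. Absorbing this polynomial growth together with $(1+\varepsilon)^m$ into a slightly larger base (using that polynomials are dominated by any exponential with base $>1$) yields the stated bound with a single constant $\kappa$ and base $1+\varepsilon$, after relabelling $\varepsilon$ at the outset. The only mildly delicate bookkeeping is that the three small pieces must be combined so that the final base is genuinely $1+\varepsilon$ and not something like $(1+\varepsilon)(1+\varepsilon)$; this is arranged by choosing $k_0$ at the start to make $\left(\frac{2}{s_{k_0}}\right)^{1/2}<1+\varepsilon/2$ and then choosing $m$ large enough (absorbing small $m$ into $\kappa$) so that the polynomial factor $C_{m,k_0}$ satisfies $C_{m,k_0}\le (1+\varepsilon/2)^m$, whence the product is $\le(1+\varepsilon)^m$ up to the constant $\kappa$ collecting $M_2(X)$, $\pi_{(r,1)}(v)$, $\big(C_2(X)\big)^{k_0-1}$, the Kahane product, and the finitely many small-$m$ contributions.
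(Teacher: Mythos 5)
Your argument is essentially the paper's: the proof given there is exactly the observation that $\frac{2}{s_{k}}<1+\varepsilon$ for $k$ large together with ``proceeding as in \cite{bps}'', i.e.\ fixing $k=k_{0}(\varepsilon)$ in Theorem~\ref{2convex}, absorbing the $k_{0}$-dependent constants $\left(C_{2}(X)\right)^{k_{0}-1}$, the Kahane product, $M_{2}(X)$ and $\pi_{(r,1)}(v)$ into $\kappa$, and noting that $C_{m,k_{0}}$ grows only polynomially in $m$. Two harmless slips that do not affect the argument: your intermediate bound $\sqrt{(m-k_{0})!/m!}\ge m^{-k_{0}/2}$ points the wrong way (what you need is $m!/(m-k_{0})!\ge (m-k_{0})^{k_{0}}$, which gives $\sqrt{(m-k_{0})!/m!}\le (m-k_{0})^{-k_{0}/2}$ and hence $C_{m,k_{0}}\lesssim e^{k_{0}}m^{k_{0}/2}$), and since $(1+\varepsilon/2)^{2}>1+\varepsilon$ the final bookkeeping should split the target base as $\sqrt{1+\varepsilon}\cdot\sqrt{1+\varepsilon}$ (or use $1+\varepsilon/3$) rather than $1+\varepsilon/2$ twice.
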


Since $M_{2}(\mathbb{C})=1$ and since the identity operator $\mathrm{id}:%
\mathbb{C}\rightarrow\mathbb{C}$ is $(1,1)$-summing with $\pi_{(1,1)}(%
\mathrm{id})=1$, taking $X=Y=\mathbb{C}$ on theorem \ref{2convex}, we
recover the polynomial Bohnenblust-Hille inequality with estimates presented
in \cite[Theorem 5.2]{bps}. More precisely, if $m\geq1$ and $%
k\in\{1,\dots,m-1\}$, then for any $n\geq1$ and any $m$-homogeneous
polynomial $P(z)=\sum_{|\alpha|=m}c_{\alpha}z^{\alpha}$ on $\mathbb{C}^{n}$, 
\begin{equation*}
\left( \sum_{|\alpha|=m}\left\vert c_{\alpha}\right\vert ^{\frac{2m}{m+1}%
}\right) ^{\frac{m+1}{2m}}\leq\left( 1+\frac{1}{k}\right) ^{\frac{m-k}{2}%
}\left( \frac{(m-k)!}{m!}\right) ^{\frac{1}{2}} \frac{m^{m}}{(m-k)^{m-k}} 
\mathrm{C}_{k}^{\mathbb{C}} \Vert P\Vert_{\mathbb{D}^{n}},
\end{equation*}
where $\mathrm{C}_{k}^{\mathbb{C}}$ denotes the $k$-th multilinear
Bohnenblust-Hille constant.

\vskip.5cm

Combining Theorem \ref{ell_1} and Theorem \ref{2convex} we have the
following result.

\begin{theorem}
\label{Hilbertlat} Let $m\geq1,\,r\in\lbrack1,2)$, $Y$ be a $GT$ space, $H$
a Hilbert function space, and $v:Y\rightarrow H$ a linear operator. Let us
fix $k\in\{1,\dots,m\}$. For all positive integers $n$ and every $m$%
-homogeneous polynomial $P:\ell_{\infty}^{n}\rightarrow Y$ defined by $%
P(z)=\sum _{|\alpha|=m}c_{\alpha}z^{\alpha}$, 
\begin{equation*}
\left( \sum_{|\alpha|=m}\left\Vert vc_{\alpha}\right\Vert _{H}^{\frac {2mr}{%
2+(m-1)r}}\right) ^{\frac{2+(m-1)r}{2mr}}\leq\left( \frac{2}{s_{k}}\right) ^{%
\frac{m-k}{2}}C_{m,k}\mathrm{C}_{k,r}^{\mathbb{C}}\pi _{\frac{2r\left(
m-1\right) }{2+\left( m-2\right) r}}(v)\Vert P\Vert_{\mathbb{D}^{n}},
\end{equation*}
where $s_{k}:=\frac{2kr}{2+(k-1)r}$ and $C_{m,k}:=\left( \frac{(m-k)!}{m!}%
\right) ^{\frac{1}{2}}\frac{m^{m}}{(m-k)^{m-k}}$.
\end{theorem}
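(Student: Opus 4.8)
The plan is to transcribe the proof of Theorem~\ref{2convex}, with the Hilbert function space $H$ playing the role of the $2$-concave, $2$-convex B.f.s.\ $X$, and to exploit two simplifications that this brings. First, $H$ has cotype $2$ and is $2$-concave and $2$-convex with all of the relevant optimal constants ($C_2(H),\,M_2(H),\,M^2(H)$) equal to $1$. Secondly --- and this is the genuinely new ingredient --- the $k$-linear Bohnenblust--Hille step will be carried out through Proposition~\ref{ell_1} (which is available precisely because $Y$ is a $GT$ space and $H$ is Hilbert) instead of through the Kahane-iterated vector-valued inequality of \cite{abps.hlw}. Set $\rho:=\tfrac{2mr}{2+(m-1)r}$ and $s_k:=\tfrac{2kr}{2+(k-1)r}<2$. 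As in Theorem~\ref{2convex} I would let $L$ be the symmetric $m$-linear operator associated with $P$, write $c_\alpha=|\mathbf i_\alpha|\,a_{\mathbf i_\alpha}$ with $a_{\mathbf i}:=Le_{\mathbf i}$, and apply Lemma~\ref{blei.interp} with $q=2$ and $s=s_k$, using $|\mathbf i|^{\rho-1}\le(|\mathbf i|^{1/2})^\rho$ and $|\mathbf i|\le|\mathbf i_{\widehat S}|\,m!/(m-k)!$, to obtain
\[
\Bigl(\sum_{|\alpha|=m}\|vc_\alpha\|_H^\rho\Bigr)^{1/\rho}
\le \sqrt{\tfrac{m!}{(m-k)!}}\;\Bigl[\prod_{S\in\mathcal P_k(m)}\Bigl(\sum_{\mathbf i_S}\Bigl(\sum_{\mathbf i_{\widehat S}}\bigl\||\mathbf i_{\widehat S}|^{1/2}va_{\mathbf i}\bigr\|_H^2\Bigr)^{s_k/2}\Bigr)^{1/s_k}\Bigr]^{1/\binom mk}.
\]

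Next, fixing $S\in\mathcal P_k(m)$ and performing the standard rewriting $\mathcal M(m-k,n)\to\mathcal J(m-k,n)$ as in Theorem~\ref{2convex}, I would recognize the inner sums as encoding the $(m-k)$-homogeneous $H$-valued polynomial $P_{\mathbf i_S}(z):=\sum_{\mathbf i_{\widehat S}\in\mathcal J(m-k,n)}|\mathbf i_{\widehat S}|\,va_{\mathbf i}\,z_{\mathbf i_{\widehat S}}$, which satisfies $P_{\mathbf i_S}(z)=vL(e_{i_S},z_{i_{\widehat S}})$. Since $H$ is $2$-convex it is $s_k$-convex, and $M_2(H)=1$, so Lemma 3.2 applied to each $P_{\mathbf i_S}$, followed by interchanging the finite sum over $\mathbf i_S$ with the integral over $\mathbb T^n$ (Tonelli), gives
\[
\Bigl(\sum_{\mathbf i_S}\Bigl(\sum_{\mathbf i_{\widehat S}\in\mathcal J(m-k,n)}\bigl\||\mathbf i_{\widehat S}|va_{\mathbf i}\bigr\|_H^2\Bigr)^{s_k/2}\Bigr)^{1/s_k}
\le \Bigl(\tfrac{2}{s_k}\Bigr)^{\frac{m-k}{2}}\Bigl(\int_{\mathbb T^n}\sum_{\mathbf i_S}\bigl\|vL(e_{i_S},z_{i_{\widehat S}})\bigr\|_H^{s_k}\,dz\Bigr)^{1/s_k}.
\]

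The decisive step is to estimate the integrand for each fixed $z\in\mathbb T^n$. There $L(\cdot,z_{i_{\widehat S}})$ is a $k$-linear operator $c_0\times\cdots\times c_0\to Y$, so, since $Y$ is a $GT$ space and $H$ a Hilbert function space, Proposition~\ref{ell_1} applies to $v\circ L(\cdot,z_{i_{\widehat S}})$ and bounds $\bigl(\sum_{\mathbf i_S}\|vL(e_{i_S},z_{i_{\widehat S}})\|_H^{s_k}\bigr)^{1/s_k}$ by $\mathrm C_{k,r}^{\mathbb C}\,\pi_{s_k}(v)\,\sup_{w^{(1)},\dots,w^{(k)}\in\mathbb T^n}\|L(w^{(1)},\dots,w^{(k)},z_{i_{\widehat S}})\|$, where $\mathrm C_{k,r}^{\mathbb C}$ is the subpolynomial constant of Theorem~\ref{BH_variant}; this bound is uniform in $z$. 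Harris's polarization formula then bounds the supremum by $\tfrac{(m-k)!\,m^m}{(m-k)^{m-k}\,m!}\|P\|_{\mathbb D^n}$. Substituting back through the two preceding displays, integrating the constant-in-$z$ bound over $\mathbb T^n$, taking the product over $S\in\mathcal P_k(m)$, and simplifying $\sqrt{\tfrac{m!}{(m-k)!}}\cdot\tfrac{(m-k)!\,m^m}{(m-k)^{m-k}\,m!}=\tfrac{m^m}{(m-k)^{m-k}}\sqrt{\tfrac{(m-k)!}{m!}}=C_{m,k}$ yields the claimed estimate.

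The step I expect to be the crux is the substitution in the third paragraph: one must verify that the factors $(C_2(X))^{k-1}$, the Kahane product $\prod_{j=1}^{k-1}\mathrm K_{\frac{2rj}{2+(j-1)r},2}$ and $M_2(X)$ present in Theorem~\ref{2convex} genuinely disappear here --- the first and third because $H$ is Hilbert, and the middle one because the $GT$ hypothesis makes every $v\colon Y\to H$ absolutely summing, so the $k$-linear Bohnenblust--Hille inequality one needs is the clean one of Theorem~\ref{BH_variant}/Proposition~\ref{ell_1} rather than the iterated vector-valued estimate of \cite{abps.hlw}. The only other, routine, point is matching the summing exponent of $v$: Proposition~\ref{ell_1} supplies $\pi_{s_k}(v)$, and in the displayed inequality this is recorded as $\pi_{\frac{2r(m-1)}{2+(m-2)r}}(v)$, that is, with the value $k=m-1$ relevant for the subexponential regime $k\to m$ (compare the passage after Theorem~\ref{2convex} and Corollary~\ref{cor_subexp}).
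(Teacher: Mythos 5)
Your proposal is essentially the paper's own argument: the paper offers no detailed proof of Theorem~\ref{Hilbertlat}, only the directive to combine Proposition~\ref{ell_1} with Theorem~\ref{2convex}, and that is precisely what you carry out --- rerunning the proof of Theorem~\ref{2convex} with the Hilbert simplifications $C_2(H)=M_2(H)=1$ and with Proposition~\ref{ell_1} (available because $Y$ is a $GT$ space) replacing the Kahane-iterated estimate from \cite{abps.hlw}, which is exactly how the factors $(C_2(X))^{k-1}$, $\prod_j \mathrm{K}_{\frac{2rj}{2+(j-1)r},2}$ and $M_2(X)$ disappear. The one point of friction is the summing index: your argument yields $\pi_{s_k}(v)$, whereas the statement records $\pi_{\frac{2r(m-1)}{2+(m-2)r}}(v)=\pi_{s_{m-1}}(v)$, and since $s_k\leq s_{m-1}$ for $k\leq m-1$ (so $\pi_{s_{m-1}}(v)\leq\pi_{s_k}(v)$) the stated norm is the smaller one and does not follow from your bound when $k<m-1$; this appears to be a slip in the paper's statement (it should read $\pi_{s_k}(v)$, consistent with what Proposition~\ref{ell_1} supplies for the $k$-linear operators $L(\cdot,z_{i_{\widehat S}})$), and you correctly flag it rather than introduce a defect of your own.
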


\noindent \textbf{Acknowledgment.} The authors would like to thank Prof. E. A. S\'{a}nchez P\'{e}rez for his fruitful comments regarding Banach lattices. Also the authors would like to thank the reviewer for insightful remarks and suggestions that improved the presentation of this paper.


\begin{thebibliography}{99}
\bibitem{abps.hlw} N. Albuquerque, F. Bayart, D. Pellegrino and J. B. Seoane-Sep\'{u}lveda, \emph{Optimal Hardy-Littlewood type inequalities for polynomials and multilinear operators}, Israel J. Math., in press.

\bibitem{anss} N. Albuquerque, D. N\'u\~nez-Al\'arcon, J. Santos and D. Serrano-Rodr\'iguez \emph{Absolutely summing multilinear operators via interpolation}, arXiv:1404.4949v1 [math.FA], 22Apr2014.

\bibitem{bayart} F. Bayart. \emph{Hardy spaces of Dirichlet series and their composition operators}. Monatsh. Math., 136(3):203-236, 2002.

\bibitem{bps} F. Bayart, D. Pellegrino and J. B. Seoane-Sep\'{u}lveda, \emph{The Bohr Radius of the $n$-dimensional polydisk is equivalent to $\sqrt{(\log n)/n}$}, Adv. Math. \textbf{264}, (2014), 726-746.

\bibitem{bh} H. F. Bohnenblust and E. Hille, On the absolute convergence of Dirichlet series, Ann. of Math. (2) \textbf{32} (1931), 600--622.

\bibitem{DefAnnals} A. Defant, L. Frerick, J. Ortega-Cerd\`a, M. Ouna\"ies and K. Seip, \emph{The Bohnenblust-Hille inequality for homogeneous polynomials is hypercontractive}, Ann. of Math. (2) \textbf{174} (2011), no. 1, 485-497.

\bibitem{DMS} A. Defant, M. Maestre, U. Schwarting, \emph{Bohr radii of vector valued holomorphic functions}, Advances in Mathematics 231 (2012) 2837-2857.

\bibitem{popa} A. Defant, D. Popa, U. Schwarting, Coordinatewise multiple
summing operators, J. Funct. Anal. 259 (2010), 220--242.

\bibitem{diniz} D. Diniz, G. Munoz-Fernandez, D. Pellegrino, J.
Seoane-Sepulveda, The asymptotic growth of the constants in the
Bohnenblust-Hille inequality is optimal, J. Funct. Anal. 263 (2012),
415--428.

\bibitem{LindTzaf} J. Lindenstrauss and L. Tzafriri, \emph{Classical Banach
Spaces II: Function Spaces}, Springer-Verlag, Germany, 1991.

\bibitem{matos} M. Matos, \emph{Fully absolutely summing and Hilbert-Schmidt
multilinear mappings}, Collect. Math. \textbf{54}, 2 (2003), 111-136.

\bibitem{MiP} B. Mitiagin, A. Pe\l czy\'{n}ski, \emph{Nuclear operators and
approximative dimension.} Proc. of ICM, Moscow. (1966), 366--372.

\bibitem{nps} D. Nu\~{n}ez-Alarc\'{o}n, D. Pellegrino, J.B. Seoane-Sep\'{u}%
lveda, \emph{On the Bohnenblust-Hille inequality and a variant of
Littlewood's} $4/3$-inequality, J. Funct. Anal. \textbf{264} (2013),
326--336.

\bibitem{npss} D. Nu\~{n}ez-Alarc\'{o}n, D. Pellegrino, J.B. Seoane-Sep\'{u}%
lveda, D.M. Serrano-Rodr\'{\i}guez, \emph{There exist multilinear
Bohnenblust--Hille constants} $\left( C_{n}\right)_{n\in\mathbb{N}}$ \emph{%
with} $\lim_{n\rightarrow\infty} \left( C_{n+1}-C_{n}\right) =0$, J. Funct.
Anal. \textbf{264} (2013).

\bibitem{OkRiSan} S. Okada, W. Ricker and E. S\'{a}nchez-P\'{e}rez, \emph{%
Optimal Domain and Integral Extension of Operators: Acting in Function Spaces%
}, Birkh\"{a}user-Verlag, Germany, 2008.

\bibitem{ps} D. Pellegrino, J. Seoane-Sepulveda, New upper bounds for the
constants in the Bohnenblust-Hille inequality, J. Math. Anal. Appl. 386
(2012), 300--307.

\bibitem{david} D. P\'{e}rez-Garc\'{\i}a, \textit{Operadores multilineales
absolutamente sumantes}. PhD Thesis, Univ. Complut. de Madrid, (2003).

\bibitem{stu} A. Pietsch, \emph{Absolut $p$-summierende Abbildungen in
normieten Raumen}; Studia Math. \textbf{27} (1967), 333--353.

\bibitem{ursula} U. C. Schwarting, \emph{Vector Valued Bohnenblust-Hille
Inequalities}, Diss. Universit{\"{a}}t Oldenburg (2013).

\bibitem{weissler} F.B. Weissler, \emph{Logarithmic Sobolev inequalities and
hypercontractive estimates on the circle,} J. Funct. Anal. \textbf{37}
(1980), no. 2, 218--234.
\end{thebibliography}
\end{document}